\newtheorem{theorem}{Theorem}
\theoremstyle{plain}
\newtheorem{corollary}{Corollary}
\newtheorem{definition}{Definition}
\newtheorem{example}{Example}
\newtheorem{lemma}{Lemme}
\newtheorem{proposition}{Proposition}
\newtheorem{remark}{Remark}
\numberwithin{equation}{section}
\begin{document}
\title[]{On the quasi-isomorphism type of a perfect chain algebra}
\author{Mahmoud Benkhalifa}
\keywords{free  differential graded algebra. Whitehead exact
sequence, quasi-isomorphism type, Adams-Hilton model}
 \subjclass[2000]{Primary 55Q15. Secondary 55U40}
\maketitle
\begin{abstract}
Let $R$ be a (P.I.D) and let $T(V),\partial)$ be a free
 $R$-dga. The
quasi-isomorphism type of $(T(V),\partial)$ is the set, denoted
$\{(T(V),\partial)\}$, of all free dgas which are quasi-isomorphic
to $(T(V),\partial)$. In this paper we investigate to characterize
and to compute the set $\{(T(V),\partial)\}$ for a new class of
free dgas called perfect (a special kind of a perfect dga is the
Adams-Hilton model of simply connected CW-complex such that
$H_{*}(X,R)$ is free). We show that if  $(T(V),\partial)$ and
$(T(W),\delta)$ are two perfect dgas, then   $(T(W),\delta)\in
\{(T(V),\partial)\}$
  if and only if their
Whitehead exact sequences are isomorphic. Moreover we show that
every  dga $(T(V),\partial)$ can be split to give a pair
$\big((T(V),\widetilde{\partial}),(\pi_{n})_{n\geq 2}\big)$
consisting with a perfect  dga $(T(V),\widetilde{\partial})$ and a
family of extensions $(\pi_{n})_{n\geq 2}$ and we establish that
if $(T(W),\widetilde{\delta})\in \{(T(V),\widetilde{\partial})\}$
 and if  the extensions $(\pi_{n})_{n\geq 2}$ and $(\pi'_{n})_{n\geq 2}$
 are isomorphic (in a certain sense), then $(T(W),\delta)\in \{(T(V),\partial)\}$.
\end{abstract}

\section{Introduction}
Let $R$ be a principal and integral domain. To each free
differential graded algebra ( free dga for short)
$(T(V),\partial)$ is associated a long exact sequence, called
Whitehead exact sequence:
\begin{equation*}
\cdots \rightarrow H_{n+1}(V,d)\overset{b_{n+1}}{\longrightarrow }%
\Gamma^{T(V)} _{n}\longrightarrow H_{n}(T(V))\longrightarrow H_{n}(V,d)%
\overset{b_{n}}{\longrightarrow }\cdots
\end{equation*}
where $\Gamma^{T(V)} _{n}=\ker \ (H_{n}(T(V_{\leq n}))\rightarrow
V_{n})$ and where $(V,d)$ is the chain complex of the
indecomposables of $(T(V),\partial)$.

Originally this sequence  was  introduced by J.H.C Whitehead in
\cite{W1} (see also \cite{Benk12,Benk18,ben2, BS} ) in order to  classify the homotopy of CW-complexes
simply connected of dimension $4$ and  later   J.H Baues
\cite{Ba3} has shown that this sequence exists also for free dgas (see also \cite{ Benk11,Benk14,Benk15,ben22,ben3,bab} )
and he proved that  two free  dgas $(T(V),\partial)$ and
$(T(W),\delta)$ such that $H_{i}(V,d)=H_{i}(W,d')=0$, for all
$i\geq 4$, are quasi-isomorphic if and if their   Whitehead exact
sequences are isomorphic.

 \noindent Recall that a  dga morphism $\alpha :(T(V),\partial
)\rightarrow (T(W),\delta )$ is called a quasi-isomorphism if
$\alpha$ induces an  isomorphism in homology. In this case we say
that  $(T(V),\partial )$ and $(T(W),\delta )$ are
quasi-isomorphic.

\noindent Recall  also that the quasi-isomorphism type of a free
dga $(T(V),\partial)$ is the set denoted $\{(T(V),\partial)\}$
consisting with all the free dgas which are  quasi-isomorphic to
$(T(V),\partial)$.

Our which would be to extend Baues theorem for a certain class of
free dgas called perfect (see definition 3) which contains all
free dgas $(T(V),\partial)$ such that the graded module
$H_{*}(V,d)$ is free. More precisely we show that:

 \noindent \emph{ Two perfect dgas are   quasi-isomorphic if and only if
their Whitehead exact sequences are isomorphic.}

\noindent As a consequence we establish the following topological
result:

\noindent \emph{ If  $X$ and  $Y$ are two simply connected
CW-complex such that  $H_{*}(X,R)$ and $H_{*}(Y,R)$
 are free,
then their respective  Adams-Hilton models \cite {ada} are
 quasi-isomorphic if and only if the  Whitehead exact sequences
associated to these models  are isomorphic.}

 \noindent Moreover we describe an algorithm that allows us to
compute the  set  $\{(T(V),\partial)\}$ where $(T(V),\partial)$ is
a perfect dga.

Next in order to generalize the above results we show that every
free dga $(T(V),\partial )$ splits to give a pair
$\big((T(V),\overline{\partial}),(\pi_{n})_{n\geq 2}\big)$
consisting with  a perfect dga $(T(V),\overline{\partial})$ and a
family of extensions $(\pi_{n})_{n\geq 2}$, where $\pi_{n}\in
\mathrm{Ext}(H_{n}(V,d),\mathrm{Coker}\,b_{n+1})$ and we establish
that:

 \emph{If  the perfect dgas $(T(V),\overline{\partial})$ and $(T(W),\overline{\delta})$ are quasi-isomorphic  and
if  the extensions $(\pi_{n})_{n\geq 2}$ and $(\pi'_{n})_{n\geq
2}$
 are isomorphic (in a certain sense), then $(T(V),\partial)$ and $(T(W),\delta)$ are
 quasi-isomorphic.}

This article is organized as follows. In section 2, the Whitehead
exact sequence associated with a free dgas  as well as  perfect
dgas are defined and their essential properties are given. Section
3 is devoted to the notion of adapted systems which constitutes
the technical part on which this work is based and that allows us
to derive  the main first results in this paper. At the end  of
section 3 we give an algorithm showing how the results in this
section can be used in order to compute the set of homotopy types
of  perfect dgas. In section 4 is devoted to the proof of the
second main result and in the end of this paper we illustrate our
results by giving some  geometric applications.
\section{Whitehead exact sequence}
Let $(T(V),\partial )$ be a free dga. For all $n\geq 2$, let
$T(V_{\leq n})$ be the free sub-dga of $T(V)$ generated by the
graded module $(V_{i})_{i\leq n}$. Define the pair $(T(V_{\leq
n}),T(V_{\leq n-1}))$ as the quotient:
\begin{equation*}
(T(V_{\leq n}),T(V_{\leq n-1}))=\frac{T(V_{\leq n})}{T(V_{\leq
n-1})}.
\end{equation*}
From the following long exact sequence associated to the pair
$(T(V_{\leq n}),T(V_{\leq n-1}))$:

\begin{picture}(300,40)(10,67)
\put(25,80){$ \cdots \rightarrow H_{n+1}(T(V_{\leq n}),T(V_{\leq
n-1}))\overset{\phi_{n+1}}{\longrightarrow} H_{n}(T(V_{\leq
n-1}))\overset{i_{n}}{\longrightarrow} H_{n}(T(V_{\leq n}))$}
\put(330,76){$\vector(0,-1){30}$} \put(70,35){$\cdots \leftarrow
H_{n-1}(T(V_{\leq n-1}))\overset{\beta_{n}}{\longleftarrow }
H_{n}(T(V_{\leq n}),T(V_{\leq n-1}))\cong V_{n}$}
\put(332,60){\scriptsize $j_{n}$}
\end{picture}

\noindent where the connecting $\beta _{n}$ is defined by:
\begin{equation}
\beta_{n}(v_{n})=\partial_{n}(v_{n})+\mathrm{Im}\,\partial_{n-1},\label{d1}
\end{equation}
 we define the graded module
$(\Gamma _{n}^{T(V)})_{n\geq 2}$ by setting:
\begin{equation}
\Gamma^{T(V)} _{n}=\ker j_{n}=\mathrm{Im}\,i_{n}.\label{X12}
\end{equation}
\begin{remark}
\label{29} If $V_{1}=0$, then $\Gamma^{T(V)} _{n}=H_{n}(T(V_{\leq
n-1})$ because in this case we have $H_{n+1}(T(V_{\leq
n}),T(V_{\leq n-1}))=0.$

From the relation (\ref{X12}) and the above exact sequence we
deduce that:
\begin{equation}
\Gamma^{T(V)} _{n}=\frac{H_{n}(T(V_{\leq
n-1}))}{\mathrm{Im}\,\phi_{n+1}}\label{101}
\end{equation}
so if  $Z_{n}(T(V_{\leq n-1}))$ denotes the sub-module of the
$n$-cycles of $(T_{n}(V_{\leq n-1}))$, then we have the following
projections:
\begin{equation}
Z_{n}(T(V_{\leq n-1}))\overset{}{\rightarrow}H_{n}(T(V_{\leq
n-1}))\overset{}{\rightarrow}\Gamma^{T(V)} _{n}\label{32}
\end{equation}
\end{remark}
The linear part $d$ of differential
  $\partial$, which is a  differential on the graded  module
 $V$,  satisfies the relation:
\begin{equation}
d_{n}=j_{n-1}\circ \beta _{n},\,\,\,\,\forall n\geq 2, \label{26}
\end{equation}
 so  the chain  complex
  $(V,d)$ can be identified with the graded module of the
indecomposables \cite{Ba3,Benk10,Benk13,Benk16} of $(T(V),\partial)$. Hence it results
the graded module: 
$$
H_{\ast}(V,d)=\Big(\frac{\ker
d_{n+1}}{\mathrm{Im}\,d_{n+2}}\Big)_{n\geq 1}
$$

Recall that  if $\alpha:(T(V),\partial )\rightarrow (T(W),\delta
)$ is a dga-morphism,  then  $\alpha$ induces a chain map
$\widetilde{\alpha}:(V,d)\to (W,d')$ which implies the graded
homomorphism:
\begin{equation}
H_{\ast}(\widetilde{\alpha}):H_{\ast}(V,d)\to
H_{\ast}(W,d').\label{28}
\end{equation}

The Whitehead exact sequence associated with $(T(V),\partial )$ is
by definition  the following exact long sequence:
\begin{equation*}
\cdots \rightarrow H_{n+1}(V,d)\overset{b_{n+1}}{\longrightarrow }%
\Gamma^{T(V)} _{n}\longrightarrow H_{n}(T(V))\longrightarrow H_{n}(V,d)%
\overset{b_{n}}{\longrightarrow }\cdots
\end{equation*}
where :
\begin{equation}
b_{n+1}\circ pr(z)=\beta _{n+1}(z). \label{8}
\end{equation}
Here  $pr:\ker d_{n+1}\twoheadrightarrow H_{n+1}(V,d)$ denotes the
projection.
\begin{remark}
\label{50} Since $V_{n}$ is free, the homomorphism  $j_{n}:
H_{n}(T(V_{\leq n}))\to V_{n}$ implies that $H_{n}(T(V_{\leq
n}))\cong \ker j _{n}\oplus \mathrm{Im}\, j _{n}$ and since
$\mathrm{Im}\, j _{n}=\ker \beta _{n}$ and $\Gamma^{T(V)}
_{n}=\ker j _{n}$ we deduce the following isomorphism:
\begin{equation}
 H_{n}(T(V_{\leq n}))\overset{i}{\cong}\Gamma^{T(V)} _{n}\oplus
\ker \beta _{n} \label{4}
\end{equation}

\noindent The isomorphism $i$ is defined as follows: first we
observe  that any  n-cycle   in  $T_{n}(V_{\leq n})$ can be
expressed as $v_{n}+q_{v_{n}}+\widetilde{q}_{v_{n}}$, where
$\partial_{n}(v_{n}+q_{v_{n}})=0$  and where
$\partial_{n}(\widetilde{q}_{v_{n}})= 0$. Here  $v_{n}\in V_{n}$
and $q_{v_{n}},\widetilde{q}_{v_{n}}\in T_{n}(V_{\leq n-1}))$.

\noindent Since $\partial_{n}(\widetilde{q}_{v_{n}})= 0$, the
element $\widetilde{q}_{v_{n}}+ \mathrm{Im}\,\partial_{n+1}$ is in
$H_{n}(T(V_{\leq n-1}))$ and therefore the element
$(\widetilde{q}_{v_{n}}+ \mathrm{Im}\,\partial_{n+1}+\mathrm{Im}\,
\phi _{n+1})$ is in $\Gamma^{T(V)} _{n}$ according to formula
(\ref{101}). Hence  the isomorphism $i$ is defined by setting:
\begin{equation}
i(v_{n}+q_{v_{n}}+\widetilde{q}_{v_{n}}+\mathrm{Im}\,
\partial _{n+1})= v_{n}\oplus (\widetilde{q}_{v_{n}}+
\mathrm{Im}\,\partial_{n+1}+\mathrm{Im}\, \phi _{n+1}) \label{102}
\end{equation}

\noindent Moreover it is well  know that a dga-morphism
$\alpha:(T(V),\partial)\to (T(W),\delta)$  induces the following
commutative diagram:

 \begin{picture}(300,90)(0,35)
\put(90,102){$\Gamma^{T(V)}_{n}\rightarrowtail H_{n}(T(V_{\leq
n}))\twoheadrightarrow \ker \beta_{n}$}
\put(90,48){$\Gamma^{T(W)}_{n}\rightarrowtail H_{n}(T(W_{\leq
n}))\twoheadrightarrow \ker \beta'_{n}$}
 \put(94,98){$\vector(0,-1){39}$}
 \put(160,98){$\vector(0,-1){39}$}
 \put(162,78){$H_{n}(\alpha^{n})$}
\put(232,78){$\widetilde{\alpha}_{n}$}
\put(77,78){$\gamma^{\alpha^{n}}_{n}$}
\put(230,98){$\vector(0,-1){39}$}
\end{picture}

\noindent where $\alpha^{n}:(T(V_{\leq n}),\delta^{(n)})\to
(T(W_{\leq n}),\delta^{(n)})$, where $\widetilde{\alpha}^{n}$ is
given by the relation (\ref{28}) and where
$\gamma^{\alpha^{n}}_{n}$ is the restriction of
$H_{n}(\alpha^{n})$ to $\Gamma_{n}^{T(V)}.$ Therefore according to
the relation (\ref{4}) the homomorphism:
$$H_{n}(\alpha^{n}):H_{n}(T(V_{\leq n}))\cong \Gamma^{T(V)} _{n}\oplus
\ker \beta _{n}\longrightarrow H_{n}(T(W_{\leq
n}))\cong\Gamma^{T(W)} _{n}\oplus \ker \beta' _{n}$$
 splits into:
\begin{equation}
H_{n}(\alpha^{n})=\gamma^{\alpha^{n}}_{n}\oplus
\widetilde{\alpha}_{n}\label{a31}
\end{equation}

- In terms of the differential $d_{n+1}:V_{n+1}\rightarrow V_{n}$
we deduce the following decomposition:
\begin{equation}
V_{n+1}\cong (\mathrm{Im}\,d_{n+1})^{\prime }\oplus \ker d_{n+1}
\label{a1}
\end{equation}
where $(\mathrm{Im}\,d_{n+1})^{\prime }\subset V_{n+1}$ is a copy of $\mathrm{%
Im}\,d_{n+1}\subset V_{n}$. Therefore the short exact sequence:
\begin{equation}
(\mathrm{Im}\,d_{n+1})^{\prime }\overset{d_{n+1}}{\rightarrowtail
}\ker d_{n}\twoheadrightarrow H_{n}(V,d)\label{19}
\end{equation}
may be chosen as a free resolution of the module $H_{n}(V,d)$ and
since $d_{n+1}( (\mathrm{Im}\,d_{n+1})^{\prime }) \subset \ker
\beta _{n}$, the short exact sequence:
\begin{equation}
(\mathrm{Im}\,d_{n+1})^{\prime }\overset{d_{n+1}}{\rightarrowtail
}\ker \beta _{n}\twoheadrightarrow \ker b_{n}  \label{100}
\end{equation}
may be also chosen as a free resolution of the sub-module $\ker
b_{n}\subset H_{n}(V,d).$
\end{remark}
\begin{remark}
\label{20}  Let $(T(V),\partial)$ be a free dga and let
$(l_{n+1,\sigma ^{\prime }})_{\sigma ^{\prime }\in \sum^{\prime
}}$  a chosen  basis of the free sub-module
$(\mathrm{Im}\,d_{n+1})'$. By the formula (\ref{26}) we know that
$\beta _{n+1}(l_{n+1,\sigma ^{\prime
}})=\partial_{n+1}(l_{n+1,\sigma'})+\mathrm{Im}\,\partial_{n}$ and
by remark \ref{50} we know that the $n$-cycle
$\partial_{n+1}(l_{n+1,\sigma ^{\prime }})$ can be written as
$\partial_{n+1}(l_{n+1,\sigma ^{\prime }})=d_{n+1}(l_{n+1,\sigma
'})+q_{l_{n+1,\sigma '}}+\widetilde{q}_{l_{n+1,\sigma '}}$.
Therefore according to isomorphism $i$ defined in (\ref{102}) we
get:
\begin{eqnarray}
i\circ \beta_{n+1}(l_{n+1,\sigma ^{\prime
}})=d_{n+1}(l_{n+1,\sigma '})\oplus (\widetilde{q}_{l_{n+1,\sigma
'}}+ \mathrm{Im}\,\partial_{n+1}+\mathrm{Im}\, \phi
_{n+1})\label{13}
\end{eqnarray}

\noindent Define the homomorphism  $\varphi
_{n}:(\mathrm{Im}\,d'_{n+1})\rightarrow \Gamma^{T(V)}_{n}$ by
setting $\varphi _{n}(l_{n+1,\sigma ^{\prime
}})=(\widetilde{q}_{l_{n+1,\sigma '}}+
\mathrm{Im}\,\partial_{n+1}+\mathrm{Im}\, \phi _{n+1})$. Therefore
by using the resolutions
of $H_{n}(V,d)$ and   $\ker b_{n}$,  given  respectively  in (\ref{19}) and (\ref{100}), the homomorphism $\overline{\varphi _{n}}:(\mathrm{Im}\,d_{n+1})'\overset{\varphi _{n}%
}{\rightarrow }\Gamma^{T(V)} _{n}\overset{pr}{\twoheadrightarrow} \mathrm{Coker}%
\,b_{n+1}$, where the homomorphism $b_{n+1}$ is given by the
Whitehead exact sequence associated with $(T(V),\partial)$,
provides two extensions which are :
\begin{eqnarray}
 [\overline{\varphi _{n}}]\in Ext_{R}^{1}(H_{n},\mathrm{Coker}%
\,b_{n+1})=\frac{Hom\big((\mathrm{Im}\,d_{n+1})^{\prime },\mathrm{Coker}%
\,b_{n+1}\big)}{(d_{n+1})^{*}\big(Hom(\ker d_{n},\mathrm{Coker}%
\,b_{n+1})\big)}\label{14}\\
 \{\overline{\varphi _{n}}\}\in Ext_{R}^{1}(\ker b_{n},\mathrm{Coker}%
\,b_{n+1})=\frac{Hom\big((\mathrm{Im}\,d_{n+1})^{\prime },\mathrm{Coker}%
\,b_{n+1}\big)}{(d_{n+1})^{*}\big(Hom(\ker d_{n},\mathrm{Coker}%
\,b_{n+1})\big)}\label{23}
 \end{eqnarray}
 \end{remark}
\noindent Thus  remark \ref{20} allows us to suggest  the
following definitions:
\begin{definition} \label{d3} A free dga $(T(V),\partial)$  is called $n$-perfect if the
homomorphism  $\varphi _{n}$ is trivial.
\end{definition}
\begin{definition}
\label{dd3} A free dga $(T(V),\partial)$ is called quasi
$n$-perfect if the extension $[\overline{\varphi _{n}}]$ is
trivial.
\end{definition}
\begin{remark}
\label{40}Consider the resolution defined  in (\ref{19}), to say
that the extension $[\overline{\varphi _{n}}]$ is  trivial means
that there exists a homomorphism $g_{n} :\ker d _{n}\rightarrow
\mathrm{Coker}\,b_{n+1}$ satisfying  the relation:
\begin{equation}
 \overline{\varphi
_{n}}=g_{n}\circ d_{n+1}\label{41}
\end{equation}
and  since $\ker \beta _{n}\subseteq \ker d _{n}$, we
deduce that the extension $\{\overline{\varphi _{n}}\}$ is also  trivial in $Ext_{R}^{1}(\ker b_{n},\mathrm{Coker}%
\,b_{n+1})$.\\

\noindent If $(T(V),\partial)$ is $n$-perfect and  if
$(z_{n+1,\sigma })_{\sigma \in \sum}$  is a chosen  basis of the
free sub-module $\ker d_{n+1})'$, then  the formulas (\ref{8}) and
(\ref{13}) imply that:
\begin{eqnarray}
i\circ \beta_{n+1}(z_{n+1,\sigma}+l_{n+1,\sigma '})=b_{n+1}\circ
pr(z_{n+1,\sigma})\oplus d_{n+1}(l_{n+1,\sigma '})\label{a13}
\end{eqnarray}

\end{remark}
\begin{example}
\label{36}Every free dga $(T(V),\partial)$ such that:
$$H_{3}(V,d)=\Bbb Z_{p}\hspace{1cm}H_{2}(V,d)=\Bbb
Z_{q}\hspace{1cm}H_{1}(V,d)=\Bbb Z_{s}$$
 where $p$, $q$ et $s$ are relatively  prime, is $2$-perfect . This comes immediately from the  fact that $Hom(H_{3}(V,d), \Gamma_{2}^{T(V)})=0$, so $Ext_{R}^{1}(H_{2}(V,d),\mathrm{Coker}%
\,b_{3})=Ext_{R}^{1}(H_{2}(V,d), \Gamma_{2}^{T(V)})\\=0$. Recall
that  form its definition it is easy to show that:
\begin{eqnarray*}
\Gamma_{2}^{T(V)}=H_{1}(V,d)\otimes H_{1}(V,d)
\end{eqnarray*}
\end{example}
\begin{definition}
\label{16} A free dga $(T(V),\partial)$ is called perfect  if it
is  $n$-perfect  for all $n$.
\end{definition}
\begin{definition}
\label{d16} A free dga $(T(V),\partial)$ is called quasi-perfect
if it is  quasi $n$-perfect  for all $n$.
\end{definition}
 From the above definitions it is clear that every
perfect  dga  $(T(V),\partial)$ is quasi-perfect .
\begin{example}
\label{35}If  $(T(V),\partial)$  is free dga  such that
 $H_{*}(V,d)$ is free,  then $(T(V),\partial)$  is
perfect  because in this case we have $Ext_{R}^{1}(H_{*}(X,\Bbb Z),\mathrm{Coker}%
\,b_{*+1})=0$. Thus  by using the  notion of the minimal model for
dgas defined in \cite{BL} we conclude that every  dga over a field
is  quasi-isomorphic  to a quasi-perfect dga.
\end{example}
\begin{example}
\label{39} Define the free graded  abelian  $V$ par the following
relations:
\begin{eqnarray*}
V_{4}=<e_{1},e_{2}>\,\,\,,\,\,\, V_{3}=<c_{1},c_{2}> \,\,\,,\,\,\,
V_{2}=<b_{1},b_{2}>\,\,\,,\,\,\, V_{1}=<a_{1},a_{2}>\,\,\,,\,\,\,
V_{i}=0\,\,\,,\,\,\, i\geq 5
\end{eqnarray*}
\noindent Next  define  the   differentials $\partial$, $\delta$
and $\psi$ on $T(V)$ as follows:
$$
\begin{array}{lcrlcrlcrlcr}
\partial_{4}(e_{1})=2c_{2}&\hspace{5mm}\partial_{3}(c_{2})&=&0&\hspace{5mm}\partial_{2}(b_{1})&=&0&\hspace{5mm}\partial_{1}(a_{1})&=&0,\\
\partial_{4}(e_{2})=b_{1}\otimes a_{1}&\hspace{5mm}\partial_{3}(c_{1})&=&2a_{1}\otimes a_{1}&\hspace{5mm}\partial_{2}(b_{2})&=&a_{2}&\hspace{5mm}\partial_{1}(a_{2})&=&0\\\\
\delta_{4}(e_{1})=2c_{2}&\hspace{1cm}\delta_{3}(c_{2})&=&0&\hspace{1cm}\delta_{4}(b_{1})&=&0&\hspace{5mm}\partial_{1}(a_{1})&=&0,\\
\delta_{4}(e_{2})=b_{1}\otimes
a_{1}&\hspace{1cm}\delta_{3}(c_{1})&=&a_{1}\otimes
a_{1}&\hspace{1cm}\partial_{2}(b_{2})&=&a_{2}&\hspace{5mm}\partial_{1}(a_{2})&=&0\\\\
\psi_{4}(e_{1})=2c_{2}&\hspace{1cm}\psi_{3}(c_{2})&=&0&\hspace{1cm}\psi_{2}(b_{1})&=&0&\hspace{5mm}\psi_{1}(a_{1})&=&0,\\
\psi_{4}(e_{2})=b_{1}\otimes
a_{1}&\hspace{1cm}\psi_{3}(c_{1})&=&a_{1}\otimes
a_{1}&\hspace{5mm}\psi_{2}(b_{2})&=&3a_{2}&\hspace{5mm}\psi_{1}(a_{2})&=&0
\end{array}
$$

\noindent For the dga $(T(V),\partial)$ it is clear that:
$$H_{3}(V,d)=\Bbb {Z}_{2}\hspace{1cm}
 H_{2}(V,d)=\Bbb Z_{2}\hspace{1cm}
 H_{1}(V,d)=\Bbb Z\hspace{1cm}
 \ker d_{2}=\Bbb Z$$
Consider the homomorphism
$\varphi_{2}:(\mathrm{Im}\,d_{3})'\rightarrow
\mathrm{Coker}\,b_{3}$. First recall that we have:
$$\Gamma_{2}^{(T(V),\partial)}=H_{1}(V,d)\otimes H_{1}(V,d)=\Bbb
Z\hspace{2cm}(\mathrm{Im}\,d_{3})'\cong \Bbb Z.$$ The homomorphism
$b_{3}:\Bbb Z_{2}=H_{3}(V,d)\rightarrow
\Gamma_{2}^{(T(V),\partial)}=\Bbb Z$ is  obviously nil therefore
$\mathrm{Coker}\,b_{3}=\Gamma_{2}^{(T(V),\partial)}=\Bbb Z$.

\noindent  Now since
 the homomorphism $\varphi_{2}$ is defined by $\varphi_{2}(c_{1})=\overline{2a_{1}\otimes
a_{1}}$, where $\overline{2a_{1}\otimes a_{1}}\in H_{2}(T(V_{\leq
2}))$ denotes the homology class of the 2-cycle $2a_{1}\otimes
a_{1}$, we deduce that  the homomorphism $\varphi_{2}:\Bbb Z\to
\Bbb Z$ is the multiplication by 2. So if we take the identity
homomorphism $Id_{\Bbb Z}$, then we can make the following diagram
commutative:

\begin{picture}(300,90)(-80,35)
\put(60,98){$\vector(0,-1){39}$}\put(12,100){$(\mathrm{Im}\,d_{3})'\cong
\Bbb Z\vector(1,0){70}\hspace{2mm}\Bbb Z\cong \ker d_{2}$}
\put(10,50){$\mathrm{Coker}\,b_{4}\cong\Bbb Z$}
\put(160,96){$\vector(-2,-1){79}$} \put(19,78){\scriptsize
$\varphi_{3}=2\times.$} \put(93,103){\scriptsize $d_{3}=2\times.$}
\put(130,70){\scriptsize $Id_{\Bbb Z}$}
\end{picture}

\noindent It results that the extension $[\varphi_{2}]$ is
trivial,  therefore $(T(V),\partial)$ is quasi 2-perfect. Note
that $(T(V),\partial)$ is not 2-perfect because the homomorphism
$\varphi_{2}$ is not trivial.

 For the dga $(T(V),\delta)$ a similar computation shows that:
$$H_{3}(V,d')=\Bbb {Z}_{2}\hspace{1cm}
 H_{2}(V,d')=\Bbb Z_{2}\hspace{1cm}
 H_{1}(V,d')=\Bbb Z\hspace{1cm}
 \ker d'_{2}=\Bbb Z$$
 then  the homomorphism $b_{3}$ is nil and  the homomorphism
$\varphi_{2}:(\mathrm{Im}\,d_{3})'\cong \Bbb Z\rightarrow \Bbb
Z\cong \mathrm{Coker}\,b_{3}$ is identified to the identity
homomorphism $id_{\Bbb Z}$, so the extension $[\varphi_{2}]$ in
$Ext(H_{2}(V,d),\mathrm{Coker}\,b_{3})=Ext(\Bbb Z_{2},\Bbb Z)=\Bbb
Z_{2}$ is not trivial therefore  $(T(V),\delta)$ is not quasi
2-perfect.

 For the dga $(T(V),\psi)$ we have in this case:
$$H_{3}(V,d'' )=\Bbb {Z}_{2}\hspace{5mm}
 H_{2}(V,d'')=\Bbb Z_{2}\hspace{5mm}
 H_{1}(V,d'')=\Bbb Z_{2}\hspace{5mm}
 \ker d''_{2}=\Bbb Z\hspace{5mm}
\Gamma_{2}^{(T(V),\psi)}=\Bbb Z_{2}$$ The homomorphism $b_{3}$ is
not nil  therefore $\mathrm{Coker}\,b_{3}=0$, so the homomorphism
$\varphi_{2}$ is also nil. Hence  $(T(V),\psi)$ is 2-perfect.
\end{example}
\begin{proposition}
 If $(T(V),\partial)$ is a  quasi-perfect dga, then  we have:
\begin{equation}
H_{n}(T(V))\cong \mathrm{Coker}\,b_{n+1}\oplus \ker b
_{n},\hspace{5mm}\forall n\geq 2\label{L6}
\end{equation}
\end{proposition}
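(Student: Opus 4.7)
The plan is first to extract from the Whitehead exact sequence the short exact sequence
\[
0 \to \mathrm{Coker}\,b_{n+1} \to H_n(T(V)) \to \ker b_n \to 0
\]
and then to show that the quasi-perfectness hypothesis forces this sequence to split. The extraction is formal: exactness of the Whitehead sequence at $H_n(T(V))$ identifies the kernel of $H_n(T(V)) \to H_n(V,d)$ with the image of $\Gamma_n^{T(V)}$, which is $\Gamma_n^{T(V)}/\mathrm{Im}\,b_{n+1} = \mathrm{Coker}\,b_{n+1}$; and exactness at $H_n(V,d)$ identifies the image of $H_n(T(V))$ in $H_n(V,d)$ with $\ker b_n$.

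To show the sequence splits, I would identify its extension class in $\mathrm{Ext}_R^1(\ker b_n, \mathrm{Coker}\,b_{n+1})$ with the class $\{\overline{\varphi_n}\}$ defined in (\ref{23}). For this I use the free resolution (\ref{100}) and lift the quotient $\ker \beta_n \twoheadrightarrow \ker b_n$ to $H_n(T(V))$ as follows: for $v_n \in \ker \beta_n$, choose (by the very definition of $\ker \beta_n$) an element $q_{v_n} \in T_n(V_{\leq n-1})$ with $\partial_n(v_n+q_{v_n})=0$, and set $\tilde\psi(v_n) = [v_n+q_{v_n}]$. Precomposing with $d_{n+1}$ and invoking Remark \ref{20}, the decomposition $\partial_{n+1}(l_{n+1,\sigma'}) = d_{n+1}(l_{n+1,\sigma'}) + q_{l_{n+1,\sigma'}} + \widetilde{q}_{l_{n+1,\sigma'}}$ together with the fact that $\partial_{n+1}(l_{n+1,\sigma'})$ is a boundary in $T(V)$ yields
\[
\tilde{\psi}\bigl(d_{n+1}(l_{n+1,\sigma'})\bigr)
\;=\; \bigl[d_{n+1}(l_{n+1,\sigma'})+q_{l_{n+1,\sigma'}}\bigr]
\;=\; -\bigl[\widetilde{q}_{l_{n+1,\sigma'}}\bigr]
\]
in $H_n(T(V))$, and the right-hand side is precisely $-\overline{\varphi_n}(l_{n+1,\sigma'})$ viewed inside $\mathrm{Coker}\,b_{n+1} \subset H_n(T(V))$. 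Thus the extension class of the short sequence equals $\pm\{\overline{\varphi_n}\}$.

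With the identification in hand the conclusion is immediate: since $(T(V),\partial)$ is quasi-perfect, Definitions \ref{dd3} and \ref{d16} give that $[\overline{\varphi_n}]$ is trivial in $\mathrm{Ext}_R^1(H_n(V,d), \mathrm{Coker}\,b_{n+1})$ for every $n \geq 2$, and Remark \ref{40} transports this triviality to $\{\overline{\varphi_n}\} = 0$ in $\mathrm{Ext}_R^1(\ker b_n, \mathrm{Coker}\,b_{n+1})$ (the same splitting map $g_n$ works by restriction, since $\ker \beta_n \subseteq \ker d_n$). The short exact sequence therefore splits and yields the announced isomorphism (\ref{L6}). The main obstacle is the identification of the extension class with $\{\overline{\varphi_n}\}$: the bookkeeping involving the $q$- and $\widetilde{q}$-components of $\partial_{n+1}(l_{n+1,\sigma'})$, and the observation that $q_{d_{n+1}(l_{n+1,\sigma'})}$ may be chosen to equal $q_{l_{n+1,\sigma'}}$, is the single delicate step.
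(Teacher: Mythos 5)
Your argument is correct and shares its skeleton with the paper's proof: both reduce the statement to identifying the extension class of $\mathrm{Coker}\,b_{n+1}\rightarrowtail H_{n}(T(V))\twoheadrightarrow \ker b_{n}$ with $\{\overline{\varphi _{n}}\}$ (up to sign), and both then kill that class by combining quasi-perfectness with Remark~\ref{40} (the splitting map $g_{n}$ restricts along $\ker \beta _{n}\subseteq \ker d_{n}$). Where you diverge is in the mechanism of the identification. The paper presents $H_{n}(T(V))=H_{n}(T(V_{\leq n+1}))$ as the quotient $(\Gamma ^{T(V)}_{n}\oplus \ker \beta _{n})/\mathrm{Im}\,\beta _{n+1}$, substitutes $i\circ \beta _{n+1}=(b_{n+1}\circ pr+\varphi _{n})\oplus d_{n+1}$, and recognizes the result as the push out of the free resolution of $\ker b_{n}$ along $\overline{\varphi _{n}}$; you instead compute the extension cocycle directly, lifting $\ker \beta _{n}\twoheadrightarrow \ker b_{n}$ to $v_{n}\mapsto [v_{n}+q_{v_{n}}]$ and using $[\partial _{n+1}(l_{n+1,\sigma '})]=0$ in $H_{n}(T(V))$ to convert the $q$-component into $-[\widetilde{q}_{l_{n+1,\sigma '}}]=-\overline{\varphi _{n}}(l_{n+1,\sigma '})$. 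The two computations describe the same Ext class, but the push-out presentation quietly handles the one point you correctly flag as delicate: a cocycle computation requires a \emph{homomorphic} lift of all of $\ker \beta _{n}$, and the pointwise assignment $q_{d_{n+1}(l_{n+1,\sigma '})}=q_{l_{n+1,\sigma '}}$ is made on the basis of $(\mathrm{Im}\,d_{n+1})'$, whose image need not be a direct summand of $\ker \beta _{n}$ over a P.I.D.; you should add a line reconciling your choice with a linear lift chosen on a basis of $\ker \beta _{n}$ (the discrepancy is absorbed into the denominator $(d_{n+1})^{*}\mathrm{Hom}(\ker \beta _{n},\mathrm{Coker}\,b_{n+1})$, for the same reason that $\{\overline{\varphi _{n}}\}$ itself is well defined). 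With that line added, your proof is a complete and slightly more hands-on variant of the paper's.
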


\begin{proof}
On one hand  we remark  that the following short exact sequence:
$$\mathrm{Coker}%
\,b_{n+1}\rightarrowtail H_{n}(T(V))\twoheadrightarrow \ker
b_{n}$$
  extracted from the Whitehead exact sequence associated with $(T(V),\partial)$ defines an extension
$ [H_{n}(T(V))]\in Ext(\ker b_{n},\mathrm{Coker}%
\,b_{n+1})$ and on the other hand from the  exact sequence
$V_{n+1}\overset{\beta _{n+1}}{\longrightarrow }H_{n}(T(V_{\leq
n}))\rightarrow H_{n}(T(V_{\leq n+1}))\rightarrow 0$ and the
relation (\ref{4}) we get:
\begin{equation}
H_{n}(T(V_{\leq n+1}))\cong \frac{\Gamma^{T(V)} _{n}\oplus \ker
\beta _{n}}{ \mathrm{Im}\,\beta _{n+1}}.  \label{112}
\end{equation}
\noindent Substituting the relation (\ref{13}), the  formula
(\ref{112}) becomes:
\begin{equation}
H_{n}(T(V_{\leq n+1}))\cong \frac{\Gamma^{T(V)} _{n}\oplus \ker
\beta _{n}}{\mathrm{Im}\varphi _{n}+\mathrm{Im}\,b_{n+1}\oplus
\mathrm{Im}\,d_{n+1}}\label{115}
\end{equation}
\noindent But it is well-known that $H_{n}(T(V_{\leq
n+1}))=H_{n}(T(V))$ and:
\begin{equation*}
\frac{\Gamma^{T(V)} _{n}}{\mathrm{Im}\varphi
_{n}+\mathrm{Im}\,b_{n+1}}\cong
\frac{\mathrm{Coker}\,b_{n+1}}{\mathrm{Im}\overline{\varphi _{n}}}
\end{equation*}
via the isomorphism  sending the element $x+(\mathrm{Im}\varphi
_{n}+\mathrm{Im}\,b_{n+1})$, where $x\in \Gamma^{T(V)} _{n}$, on
$(x+\mathrm{Im}\,b_{n+1})+\mathrm{Im}\overline{\varphi} _{n}$, so
the relation (\ref{115}) may be written:
\begin{equation*}
H_{n}(T(V))\cong \frac{\mathrm{Coker}\,b_{n+1}\oplus \ker \beta
_{n}}{\mathrm{Im}\overline{\varphi
_{n}}\oplus\mathrm{Im}\,d_{n+1}}\label{43}.
\end{equation*}
The last expression means that the following square:

\begin{picture}(300,100)(10,35)
\put(120,98){$\vector(0,-1){39}$} \put(95,102){$
(\mathrm{Im}\,d_{n+1})'\overset{d_{n+1}}{\rightarrowtail}\ker
\beta_{n}\twoheadrightarrow \ker b_{n}$}
\put(92,50){$\mathrm{Coker}
\,b_{n+1}\overset{}{\rightarrowtail}H_{n}(T(V))\twoheadrightarrow
\ker b_{n}$} \put(121,78){$\overline{\varphi _n}$}
\put(180,98){$\vector(0,-1){39}$}
\end{picture}

\noindent is a push out.  Therefore by the isomorphism $$\Phi:Ext_{R}^{1}(\ker b_{n},\mathrm{Coker}%
\,b_{n+1})\overset{\cong}{\longrightarrow}Ext(\ker b_{n},\mathrm{Coker}%
\,b_{n+1}),$$ we deduce that $[H_{n}(T(V))]=
\Phi(\{\overline{\varphi _{n}}\})$ for all $n\geq 2$, where the
extension $\{\overline{\varphi _{n}}\}$ is defined in remark 2.
Now since $(T(V),\partial)$ is quasi-perfect , the extension
$[\overline{\varphi _{n}}]$ is  trivial  in $Ext_{R}^{1}(H_{n}(V,d),\mathrm{Coker}%
\,b_{n+1})$, which implies that the extension
$\{\overline{\varphi _{n}}\}$  is also trivial in $Ext_{R}^{1}(\ker b_{n},\mathrm{Coker}%
\,b_{n+1})$. It results from this that the extension
$[H_{n}(T(V))]$ is trivial.
\end{proof}
\section{Adapted  systems  of order $n$}
This section is  the technical part  of this work. We begin by
defining the adapted systems of order $n$ and their morphisms, a
notion  which we need  to prove the main results in this paper.
\begin{definition}
 An adapted  system  of order $n$ is a pair
$\big((T(V),\partial^{(n)}),b_{n+1}\big)$ consisting with a
perfect dga  $(T(V_{\leq n}),\partial^{(n)})$, a free  chain
complex $(V,d)$  such that the composed homomorphism:
\begin{equation}
V_{n+1}\overset{d_{n+1}}{\longrightarrow } H _{n}(T(V_{\leq
n}),T(V_{\leq n-1}))\cong V_{n}\overset{\beta
_{n}}{\longrightarrow }H _{n-1}(T(V_{\leq n-1})) \label{22}
\end{equation}
is trivial and a homomorphism $b_{n+1}:H_{n+1}(V,d) \rightarrow
\Gamma^{T(V_{\leq n})}_{n}$. Recall that $\Gamma^{T(V_{\leq
n})}_{n}$ is given by the relation (\ref{4}).
\end{definition}
\begin{definition}
A morphism  between two adapted   systems  of order $n$ is a pair
 $(\xi _{*},\alpha ):\big((T(V),\partial^{(n)}),b_{n+1}\big)\to
\big((T(W),\delta^{(n)}),b'_{n+1}\big)$  with the following
properties:

 \noindent $\alpha:(T(V_{\leq n}),\partial^{(n)})\to(T(W_{\leq n}),\delta^{(n)})$ is  a dga-morphism  and  $\xi _{*}:(V,d)\to(W,d')$ is a chain map satisfying $H_{\leq
n}(\xi _{*})=H_{\leq n}(\alpha) $  and for which the following
diagram commutes:

\begin{picture}(300,90)(-30,30)
\put(65,100){$ H_{n+1}(V,d)
\hspace{1mm}\vector(1,0){80}\hspace{1mm}H_{n+1}(W,d')$}
 \put(89,76){$b_{n+1}$} \put(217,76){$b'_{n+1}$}
\put(87,97){$\vector(0,-1){38}$} \put(215,96){$\vector(0,-1){38}$}
\put(145,103){\scriptsize $H_{n+1}(\xi_{*})$}
\put(150,52){$\gamma_{n}^{\alpha}$} \put(85,48){$
\Gamma^{T(V_{\leq n})}_{n}\hspace{1mm}\vector(1,0){84}\hspace{1mm}
\Gamma ^{T(W_{\leq n})}_{n}$} \put(-15,76){$(A)$}
\end{picture}

\noindent where $\gamma_{n}^{\alpha}$ is the restriction of
$H_{n}(\alpha)$ to $\Gamma^{T(V_{\leq n})}_{n}$. Note  that  the
commutativity of the diagram $(A)$ means that:
\begin{equation}
\gamma_{n}^{\alpha}\circ b_{n+1}\circ pr( z)=b'_{n+1}\circ pr\circ
\xi_{n+1}(z)\label{103}
\end{equation}
 where  $z\in \ker d_{n+1}$ and where $pr:\ker
d_{n+1}\twoheadrightarrow H_{n+1}(V,d)$, $pr':\ker
d'_{n+1}\twoheadrightarrow H_{n+1}(W,d')$
\end{definition}
 \begin{example}
\label{e1}Let $(T(V),\partial)$ be a  perfect dga and let:
\begin{equation*}
\cdots \rightarrow H_{n+1}(V,d)\overset{b_{n+1}}{\longrightarrow }%
\Gamma^{T(V)} _{n}\longrightarrow H_{n}(T(V))\longrightarrow H_{n}(V,d)%
\overset{b_{n}}{\longrightarrow }\cdots
\end{equation*}
be its  Whitehead exact sequence. Then  the pair
$\big((T(V),\partial^{(n)}),b_{n+1}\big)$, where $(V,d)$ is the
module of the indecomposables of  $(T(V),\partial)$, is an adapted
system of order $n$. Likewise if
$\alpha:(T(V),\partial)\rightarrow (T(W),\delta)$ is a dga
morphism, then the pair $(\widetilde{\alpha} ,\alpha ^{ n})$,
where $\alpha^{ n}:(T(V_{\leq n}),\partial^{(n)})\rightarrow
(T(W_{\leq n}),\delta^{(n)})$ is the restriction of $\alpha$ to
$T(V_{\leq n})$ and where $\widetilde{\alpha}:(V,d)\to (W,d')$ is
like in (\ref{28}), is a morphism from the two adapted systems of
order $n$ associated respectively with $(T(V),\partial)$ and
$(T(W),\delta)$.
\end{example}
\begin{definition}
\label{r3}  We say that a  morphism  $(\xi _{*},\alpha )$ is an
equivalence  if $\alpha $ is a quasi-isomorphism and  if
$H_{*}(\xi _{*})$ is an  isomorphism of  graded modules.
\end{definition}

Let us denote by $\mathbf {PDGA}$  the category of perfect dgas,
by $\mathbf {PDGA}_{n+1}$ the full sub-category of $\mathbf
{PDGA}$ consisting with   $(T(V),\partial)$ such that $V_{i}=0$
for all $i\geq n+2$, by $\Bbb {AD}_{n}$ the category of the
adapted systems of order $n$ and their morphisms and by $\Bbb
{AD}_{n}^{n+1}$ the full sub-category of $\Bbb {AD}_{n}$ whose
objects are the adapted systems
$\big((T(V),\partial^{(n)}),b_{n+1}\big)$ such that $V_{i}=0$ for
all $i\geq n+2$. Example \ref{e1} allows us to define, for all
$n\geq 2$, a functor $F_{n}:\mathbf {PDGA}\to\Bbb {AD}_{n}$ by
setting:
\begin{eqnarray}
F_{n}(X)=\big((T(V),\partial^{(n)}),b_{n+1}\big)\hspace{5mm},\hspace{5mm}F_{n}(\alpha)=(\widetilde{\alpha},\alpha
^{ n})\label{15}
\end{eqnarray}
and let $F_{n}^{n+1}:\mathbf {PDGA}_{n+1 }\to\Bbb {AD}_{n}^{n+1}$
be its  restriction  to $\mathbf {PDGA}_{n+1}$.
\subsection{Properties of the functor $F_{n}$}
This  paragraph is devoted to  the properties of the functor
$F_{n}$ which we shall use later to prove the main results in this
paper. We begin by the following proposition
\begin{proposition}
\label{p1}For every  object
$\big((T(V),\partial^{(n)}),b_{n+1}\big)$ in  $\Bbb
{AD}_{n}^{n+1}$, there exists an object  $(T(V_{\leq
n+1}),\partial^{(n+1)})$ in $\mathbf {PDGA}_{n+1}$  such that:
\begin{eqnarray}
F_{n}^{n+1}((T(V_{\leq
n+1}),\partial^{(n+1)}))&=&\big((T(V),\partial^{(n)}),b_{n+1}\big)\label{12}.
\end{eqnarray}
\end{proposition}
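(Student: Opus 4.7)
The plan is to construct the desired differential $\partial^{(n+1)}$ on $T(V_{\leq n+1})$ by extending the given $\partial^{(n)}$ on $T(V_{\leq n})$. Using the splitting (\ref{a1}), $V_{n+1}\cong (\mathrm{Im}\,d_{n+1})'\oplus \ker d_{n+1}$, I will define $\partial^{(n+1)}$ separately on each summand and then extend by the derivation law. The input data $(d_{n+1}, b_{n+1})$ of the adapted system determine, respectively, the linear behavior on $(\mathrm{Im}\,d_{n+1})'$ and the connecting behavior on $\ker d_{n+1}$.

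\medskip

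First, fix a basis $(l_{n+1,\sigma'})_{\sigma'\in\sum'}$ of $(\mathrm{Im}\,d_{n+1})'$. The triviality of the composition (\ref{22}) gives $\beta_n(d_{n+1}(l_{n+1,\sigma'}))=0$ in $H_{n-1}(T(V_{\leq n-1}))$, so by the definition (\ref{d1}) of $\beta_n$ the element $\partial^{(n)}(d_{n+1}(l_{n+1,\sigma'}))$ is a boundary in $T(V_{\leq n-1})$. Pick $q_{l_{n+1,\sigma'}}\in T_{n}(V_{\leq n-1})$ with $\partial^{(n)}(q_{l_{n+1,\sigma'}})=-\partial^{(n)}(d_{n+1}(l_{n+1,\sigma'}))$ and set
\[
\partial^{(n+1)}(l_{n+1,\sigma'})=d_{n+1}(l_{n+1,\sigma'})+q_{l_{n+1,\sigma'}},
\]
which is an $n$-cycle of $T(V_{\leq n})$; note that in the decomposition of formula (\ref{13}) this choice forces $\widetilde{q}_{l_{n+1,\sigma'}}=0$.

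\medskip

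Second, fix a basis $(z_{n+1,\sigma})_{\sigma\in\sum}$ of $\ker d_{n+1}$. For each $\sigma$, use the surjection (\ref{32}) to lift $b_{n+1}\big(pr(z_{n+1,\sigma})\big)\in \Gamma_n^{T(V_{\leq n})}$ to an $n$-cycle $\widetilde{q}_{z_{n+1,\sigma}}\in Z_n(T(V_{\leq n-1}))$, and set
\[
\partial^{(n+1)}(z_{n+1,\sigma})=\widetilde{q}_{z_{n+1,\sigma}},
\]
which is automatically a cycle. Extending by the derivation rule produces a differential $\partial^{(n+1)}$ on $T(V_{\leq n+1})$; since $\partial^{(n+1)}$ sends each new generator to a $\partial^{(n)}$-cycle, $(\partial^{(n+1)})^2=0$.

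\medskip

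It remains to verify two things. (i) The resulting dga is perfect: $k$-perfectness for $k\leq n-1$ is inherited from the hypothesis that $(T(V_{\leq n}),\partial^{(n)})$ is perfect; $n$-perfectness follows from $\widetilde{q}_{l_{n+1,\sigma'}}=0$ together with (\ref{13}), which gives $\varphi_n=0$; and $k$-perfectness for $k\geq n+1$ is vacuous because $V_{n+2}=0$. (ii) Applying $F_n^{n+1}$ recovers the given adapted system: the restriction of $\partial^{(n+1)}$ to $T(V_{\leq n})$ is $\partial^{(n)}$ by construction, the linear part on $V_{n+1}$ is $d_{n+1}$, and the connecting map of the Whitehead sequence of the new dga sends $pr(z_{n+1,\sigma})$ to the class of $\widetilde{q}_{z_{n+1,\sigma}}$ in $\Gamma_n^{T(V_{\leq n})}$, which by (\ref{a13}) and the choice of the lift agrees with the given $b_{n+1}(pr(z_{n+1,\sigma}))$.

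\medskip

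The main obstacle is the $(\partial^{(n+1)})^2=0$ requirement on the summand $(\mathrm{Im}\,d_{n+1})'$: without the hypothesis (\ref{22}) the element $\partial^{(n)}(d_{n+1}(l_{n+1,\sigma'}))$ need not bound, and no corrective term $q_{l_{n+1,\sigma'}}$ exists. Once this is secured, the only subtlety is to match the induced $b_{n+1}$ with the prescribed one, which reduces to unravelling (\ref{a13}) in the case $d_{n+1}(z_{n+1,\sigma})=0$ and using that the projection $Z_n(T(V_{\leq n-1}))\twoheadrightarrow \Gamma_n^{T(V_{\leq n})}$ is surjective.
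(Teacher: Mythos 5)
Your proof is correct and follows essentially the same route as the paper: you split $V_{n+1}\cong(\mathrm{Im}\,d_{n+1})'\oplus\ker d_{n+1}$, use hypothesis (\ref{22}) to produce a corrective term $q_{l_{n+1,\sigma'}}$ (the paper's $-a_{n+1,\sigma}$) making $\partial^{(n+1)}(l_{n+1,\sigma'})$ a cycle, lift $b_{n+1}\circ pr$ to cycles to define $\partial^{(n+1)}$ on $\ker d_{n+1}$ (the paper's $\mu_{n+1}$), and conclude perfectness from $\widetilde{q}_{l_{n+1,\sigma'}}=0$, i.e.\ $\varphi_n=0$. The only differences are cosmetic bookkeeping choices of where the lift and the corrective term live.
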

\begin{proof} For the given homomorphism $b_{n+1}$  corresponds a homomorphism
$\mu _{n+1}$ making the following diagram commutes:

\begin{picture}(300,85)(-100,0)
\put(-5,60){$\ker
d_{n+1}\hspace{1mm}\vector(1,0){75}\hspace{1mm}Z_{n}(T(V_{\leq
n}))$}
 \put(70,65){$\mu_{n+1}$}
\put(135,10){$\Gamma^{T(V)}_{n}\longrightarrow
\mathrm{Coker}\,b_{n+1}\to 0$} \put(65,16){$b_{n+1}$}
 \put(138,55){\vector(0,-1){34}}
\put(-20,10){$H_{n+1}(V,d)$} \put(11,57){\vector(0,-1){35}}
\put(35,13){$\vector(1,0){95}$}
\end{picture}

\noindent where $Z_{n}(T(V_{\leq n}))$ is the sub-module of the
$n$-cycles of $T(V_{\leq n}).$  Let $(z_{n+1,\sigma})_{\sigma\in
\Sigma}$ and $(l_{n+1,\sigma})_{\sigma'\in \Sigma'}$ be two chosen
bases of $\ker d_{n+1}$ and $(\mathrm{Im}\,d'_{n+1})$
respectively. Since from the relations (\ref{d1}) and (\ref{22})
we have:
$$\beta_{n}\circ d_{n+1}(l_{n+1,\sigma})=\partial_{n}
^{(n)}(
d_{n+1}(l_{n+1,\sigma}))+\mathrm{Im}\,\partial^{(n)}_{n+1}=0
\text{ in }H_{n}(T(V_{\leq n}))$$ it results that  the $n$-cycle
$\partial_{n} ^{(n)}( d_{n+1}(l_{n+1,\sigma}))$ is a $n$-boundary,
so there exists  an element $a_{n+1,\sigma}\in T_{n+1}(V_{\leq
n})$ satisfying:
\begin{eqnarray}
 \partial_{n}
^{(n)}(d_{n+1}(l_{n+1,\sigma}))=\partial_{n+1}
^{(n)}(a_{n+1,\sigma})\label{27}.
\end{eqnarray}

\noindent Thus by using the decomposition of  $V_{n+1}$ given in
(\ref{a1}) we define $\partial^{(n+1)} $ on  $T(V_{\leq n+1})$ by
setting:
\begin{eqnarray}
\partial^{(n+1)} _{n+1}(z_{n+1,\sigma}+l_{n+1,\sigma})&=&\mu_{n+1}(z_{n+1,\sigma})+d_{n+1}(l_{n+1,\sigma})-a_{n+1,\sigma}\label{a24}\\
\partial^{(n+1)}_{\leq n} &=&\partial _{\leq n}^{(n)}\nonumber,
\end{eqnarray}

\noindent Now by the above diagram  we conclude that the element
$\mu_{n+1}(z_{n+1,\sigma})\in Z_{n}(T(V_{\leq n}))$ so $\partial
^{(n+1)} _{n}(\mu_{n+1}(z_{n+1,\sigma}))=0$ and according to
relation (\ref{a24}) we have:
 \begin{eqnarray*}
 \partial ^{(n+1)} _{n}( d_{n+1}(l_{n+1,\sigma})-a_{n+1,\sigma})=\partial ^{(n)} _{n}(
 d_{n+1}(l_{n+1,\sigma})-a_{n+1,\sigma})=0,
\end{eqnarray*}
therefore  $\partial _{n}^{(n+1)}\circ \partial _{n+1}^{(n+1)}=0$
and $\partial^{(n+1)}$ is a differential. On the one hand by its
construction it is easy to verify that  the Whitehead exact
sequence associated with $(T(V_{\leq n+1}),\partial^{(n+1)})$ can
be written:
\begin{equation*}
\cdots \rightarrow H_{n+1}(V,d)\overset{b_{n+1}}{\longrightarrow
}\Gamma ^{T(V_{\leq n+1})}_{n}\rightarrow H_{n}(T(V_{\leq
n+1}))\rightarrow\cdots
\end{equation*}
and on the other hand  by going back to remark \ref{20} we have
seen that if the $n$-cycle $\partial_{n+1}(l_{n+1,\sigma ^{\prime
}})$ is written as $\partial_{n+1}(l_{n+1,\sigma ^{\prime
}})=d_{n+1}(l_{n+1,\sigma '})+q_{l_{n+1,\sigma
'}}+\widetilde{q}_{l_{n+1,\sigma '}}$, then  the homomorphism
$\varphi _{n}$ is defined by $\varphi _{n}(l_{n+1,\sigma ^{\prime
}})=(\widetilde{q}_{l_{n+1,\sigma '}}+
\mathrm{Im}\,\partial_{n+1}+\mathrm{Im}\, \phi _{n+1})$. But the
relation (\ref{a24}) implies that $q_{l_{n+1,\sigma
'}}=-a_{n+1,\sigma'}$ and the element
$\widetilde{q}_{l_{n+1,\sigma '}}$ is nil, then $\varphi _{n}$ is
also nil, so $(T(V_{\leq n+1}),\partial ^{(n+1)} )$ is
$n$-perfect. Now since, by hypothesis  $(T(V_{\leq n}),\partial
^{(n)} )$ is perfect, we deduce tha $(T(V_{\leq n+1}),\partial
^{(n+1)} )$ is also perfect. Finally from the definition of the
functor $F^{n+1}_{n}$ we easy get the
  relation (\ref{12})
\end{proof}

\begin{proposition}
\label{p2} Let $(T(V),\partial)$, $(T(W),\delta)$ two perfect dgas
and let:  $$(\xi _{*},\alpha ):\big((T(V),\partial
^{(n)}),b_{n+1}\big)\rightarrow \big((T(W),\delta
^{(n)}),b'_{n+1}\big)$$ be a  morphism between their  respective
adapted system of order
 $n$. Then there exists  a adg-morphism $\chi
:(T(V_{\leq n+1}),\partial^{(n+1)})\to (T(W_{\leq
n+1}),\delta^{(n+1)})$ satisfying:
\begin{equation}
H_{\leq n+1}(\overline{\chi})=H_{\leq n+1}(\xi _{*}).\label{X13}
\end{equation}
Moreover  $(\xi _{*},\alpha )$ is an equivalence if and only if
 $\chi$ is a quasi-isomorphism.
\end{proposition}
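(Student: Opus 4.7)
The plan is to extend $\alpha$ to a dga-morphism $\chi$ by prescribing values on the new generators in $V_{n+1}$. Using the splitting $V_{n+1}\cong (\mathrm{Im}\,d_{n+1})'\oplus \ker d_{n+1}$ of (\ref{a1}) and formula (\ref{a24}) that defines $\partial^{(n+1)}$, I fix bases $(z_{n+1,\sigma})$ of $\ker d_{n+1}$ and $(l_{n+1,\sigma'})$ of $(\mathrm{Im}\,d_{n+1})'$ and seek $\chi(v)=\xi_{n+1}(v)+c(v)$ with corrections $c(v)\in T_{n+1}(W_{\leq n})$ chosen so that $\delta^{(n+1)}\chi(v)=\alpha(\partial^{(n+1)}(v))$. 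On $T(V_{\leq n})$ we set $\chi=\alpha$.

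For $v=l_{n+1,\sigma'}$, the chain-map property of $\xi_{*}$ gives $d'_{n+1}\xi_{n+1}(l_{n+1,\sigma'})=\xi_{n}(d_{n+1}(l_{n+1,\sigma'}))$, and comparing $\alpha(d_{n+1}(l_{n+1,\sigma'})-a_{n+1,\sigma'})$ with $\delta^{(n+1)}\xi_{n+1}(l_{n+1,\sigma'})$ shows that they differ by an $n$-boundary in $T(W_{\leq n})$, producing the correction. The delicate case is $v=z_{n+1,\sigma}$: the candidate $\xi_{n+1}(z_{n+1,\sigma})$ has $\delta^{(n+1)}\xi_{n+1}(z_{n+1,\sigma})=\mu'_{n+1}(\xi_{n+1}(z_{n+1,\sigma}))$ by (\ref{a24}), and we want this to equal $\alpha(\mu_{n+1}(z_{n+1,\sigma}))$. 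The commutativity (\ref{103}) of diagram $(A)$ shows that both cycles project to the same element of $\Gamma^{T(W_{\leq n})}_{n}$, so their difference lies in $\ker\beta'_{n}$, and via the splitting (\ref{4}) together with the freedom in $\mu'_{n+1}$ this residual term can be absorbed into $c(z_{n+1,\sigma})$. This is the main obstacle, and it is precisely where the morphism-of-adapted-systems hypothesis is needed. The identity $H_{\leq n+1}(\overline{\chi})=H_{\leq n+1}(\xi_{*})$ then follows by reading off indecomposables: in degrees $\leq n$ it is part of the morphism data, and in degree $n+1$ the correction $c(v)$ is decomposable or already in $T(W_{\leq n})$, so $\overline{\chi}$ agrees with $\xi_{*}$ on $V_{n+1}$ after passing to indecomposables.

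For the equivalence statement, assemble the Whitehead exact sequences of $(T(V_{\leq n+1}),\partial^{(n+1)})$ and $(T(W_{\leq n+1}),\delta^{(n+1)})$ into a ladder whose vertical arrows are $H_{\ast}(\overline{\chi})=H_{\ast}(\xi_{*})$ on the $H_{\ast}(V,d)$-terms, the map $\gamma^{\chi}_{\ast}$ on the $\Gamma$-terms (which coincides with $\gamma^{\alpha}_{\ast}$ in the relevant range), and $H_{\ast}(\chi)$ in the middle. If $(\xi_{*},\alpha)$ is an equivalence, then $\alpha$ is a quasi-isomorphism on $(T(V_{\leq n}),\partial^{(n)})$, which combined with (\ref{4}) gives $\gamma^{\alpha}_{\ast}$ iso, and $H_{\ast}(\xi_{*})$ is iso; the five-lemma then yields $H_{\ast}(\chi)$ iso, so $\chi$ is a quasi-isomorphism. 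Conversely, if $\chi$ is a quasi-isomorphism then $H_{\ast}(\xi_{*})$ iso follows from (\ref{X13}), and a second application of the five-lemma to the same ladder gives $\gamma^{\alpha}_{\ast}$ iso, hence (via (\ref{4}) and the decomposition (\ref{a31})) that $\alpha$ is a quasi-isomorphism, so $(\xi_{*},\alpha)$ is an equivalence.
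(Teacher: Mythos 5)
Your construction follows the same route as the paper's proof: keep $\chi_{\leq n}=\alpha$ and correct $\xi_{n+1}$ on the generators of $V_{n+1}$ by elements of $T_{n+1}(W_{\leq n})$ so that $\chi$ commutes with the differentials, the point being that the obstruction cycle is a boundary. The treatment of the $l_{n+1,\sigma'}$'s and the reading-off of indecomposables are fine, and your five-lemma argument for the ``Moreover'' clause is a genuine addition: the paper's own proof stops once (\ref{X13}) is established and never addresses that part.

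However, the step you yourself single out as the main obstacle is not closed by the reason you give. To absorb the residual $\mu'_{n+1}(\xi_{n+1}(z_{n+1,\sigma}))-\alpha(\mu_{n+1}(z_{n+1,\sigma}))$ into a correction $c(z_{n+1,\sigma})\in T_{n+1}(W_{\leq n})$ you need this cycle to be a boundary, i.e.\ to vanish in $H_{n}(T(W_{\leq n}))$; knowing only that its class lies in the summand $\ker\beta'_{n}$ of the splitting (\ref{4}) is not enough, and there is no remaining ``freedom in $\mu'_{n+1}$'' to exploit, since $\mu'_{n+1}$ is already fixed as part of $\delta^{(n+1)}$. What closes the gap --- and what the paper's computation actually uses --- is that both classes lie entirely in $\Gamma_{n}^{T(W_{\leq n})}$, with no $\ker\beta'_{n}$ component: because the dgas are perfect, formula (\ref{a13}) gives $i'\circ\beta'_{n+1}=b'_{n+1}\circ pr'\oplus d'_{n+1}$, and $d'_{n+1}\xi_{n+1}(z_{n+1,\sigma})=\xi_{n}d_{n+1}(z_{n+1,\sigma})=0$ since $\xi_{*}$ is a chain map; likewise $i\circ\beta_{n+1}(z_{n+1,\sigma})=b_{n+1}\circ pr(z_{n+1,\sigma})\oplus 0$, so $H_{n}(\alpha)\beta_{n+1}(z_{n+1,\sigma})=\gamma_{n}^{\alpha}\circ b_{n+1}\circ pr(z_{n+1,\sigma})$. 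Hence agreement of the $\Gamma$-components, which is exactly (\ref{103}), is agreement of the full homology classes; the difference is therefore of the form $\delta_{n+1}(y_{n+1,\sigma})$, and $y_{n+1,\sigma}$ is your correction. With this substitution your argument coincides with the paper's, which handles the $z$- and $l$-parts in one computation of $i'\circ\big(H_{n}(\alpha)\circ\beta_{n+1}-\beta'_{n+1}\circ\xi_{n+1}\big)$.
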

\begin{proof}
First consider the following diagram:

\begin{picture}(300,160)(10,-30)
\put(60,100){$V_{n+1}\hspace{1mm}\vector(1,0){160}\hspace{1mm}W_{n+1}$}
 \put(50,76){$\beta_{n+1}$} \put(268,76){$\beta'_{n+1}$}
\put(76,96){$\vector(0,-1){38}$} \put(265,96){$\vector(0,-1){38}$}
\put(155,103){$\xi_{n+1}$} \put(165,52){$H_{n}(\alpha)$}
\put(60,48){$H_{n}(T(V_{\leq n}))
\hspace{1mm}\vector(1,0){120}\hspace{1mm}H_{n}(T(W_{\leq
n}))\vector(1,0){40}\hspace{1mm}W_{n}$}\put(325,52){$j'_{n}$}
\put(60,-10){$\Gamma_{n}^{T(V)}\oplus \ker
\beta_{n}\hspace{1mm}\vector(1,0){110}\hspace{1mm}\Gamma_{n}^{T(W)}\oplus
\ker
\beta'_{n}$}\put(76,45){$\vector(0,-1){38}$}\put(57,20){$i\cong$}
\put(265,46){$\vector(0,-1){38}$} \put(245,20){$i'\cong$}
\put(165,-6){$\gamma_{n}^{\alpha}\oplus \xi_{n}$}
\end{picture}

\noindent Where the lower square commutes by the relation
(\ref{a31}). Since $j'_{n}\circ(H_{n}(\alpha)\circ \beta
_{n+1}-\beta' _{n+1}\circ \xi_{n+1})=0$, so
 $\mathrm{Im}\,(H_{n}(\alpha)\circ \beta
_{n+1}-\beta' _{n+1}\circ \xi_{n+1})\subseteq \Gamma_{n}^{T(W)}$.
Also since  the dgas $(T(V),\partial)$ and  $(T(W),\delta)$ are
perfect, from the formula  (\ref{a13})  we have:
\begin{eqnarray}
i\circ \beta _{n+1}=b_{n+1}\circ pr\oplus d_{n+1}
\hspace{2cm}i'\circ \beta' _{n+1}=b'_{n+1}\circ pr'\oplus d'_{n+1}
\label{34}
\end{eqnarray}
Let us decompose $V_{n+1}\cong \ker d_{n+1}\oplus
\mathrm{Im}\,d_{n+1}$ as in (\ref{a1}). If  $(z_{n+1,\sigma
})_{\sigma \in \sum }$ and $(l_{n+1,\sigma '})_{\sigma'\in \sum'}$ are  respectively two chosen  bases of $%
\ker d_{n+1}$ and $(\mathrm{Im}\,d_{n+1})'$, then on one hand
 we get:
\begin{eqnarray}
i'\circ  H_{n}(\alpha)\circ  \beta _{n+1}(z_{n+1,\sigma}\oplus
l_{n+1,\sigma'})&=&(\gamma_{n}^{\alpha}\oplus \xi_{n})\circ
(b_{n+1}\circ pr\oplus d_{n+1})(z_{n+1,\sigma}\oplus l_{n+1,\sigma'})\nonumber\\
&=&\gamma_{n}^{\alpha}\circ b_{n+1}\circ pr(z_{n+1,\sigma})\oplus
\xi_{n}\circ d_{n+1}(l_{n+1,\sigma'})\nonumber
\end{eqnarray}
and on the other hand we have:
\begin{eqnarray}
i' \circ  \beta' _{n+1}\circ  \xi_{n+1}(z_{n+1,\sigma}\oplus
l_{n+1,\sigma'})&=& (b'_{n+1}\circ pr'\oplus d'_{n+1})(\xi_{n+1}(z_{n+1,\sigma})\oplus \xi_{n+1}(l_{n+1,\sigma'}))\nonumber\\
&=& b'_{n+1}\circ pr'\circ \xi_{n+1}(z_{n+1,\sigma})\oplus
d'_{n+1}\circ  \xi_{n+1}(l_{n+1,\sigma'}).\nonumber
\end{eqnarray}
 Since $\xi_{*}$ is a chain map, according to the relation (\ref{103}),  we deduce
that:
$$ H_{n}(\alpha)\circ \beta _{n+1}-\beta'
_{n+1}\circ \xi_{n+1}=0$$ which implies that the element
$(H_{n}(\alpha) \beta _{n+1}-\beta' _{n+1}
\xi_{n+1})(z_{n+1,\sigma}\oplus l_{n+1,\sigma'})$ is nil in
$H_{n}(T(W_{\leq n}))$. But we know that:
\begin{eqnarray*}
(H_{n}(\alpha)\beta _{n+1}-\beta'
_{n+1}\xi_{n+1})(z_{n+1,\sigma}\oplus
l_{n+1,\sigma'})=(\alpha_{n}\partial_{n}-\delta_{n}\xi_{n+1})(z_{n+1,\sigma}\oplus
l_{n+1,\sigma'})+\mathrm{Im}\,\delta_{n+1}.
\end{eqnarray*}
The last condition means that the element
$(\alpha_{n}\partial_{n}-\delta_{n}\xi_{n+1})(z_{n+1,\sigma}\oplus
l_{n+1,\sigma'})$ belongs to $\mathrm{Im}\,\delta_{n+1}$, so there
exists an element
 $y_{n+1,\sigma}\in T_{n+1}(W_{\leq n})$ satisfying:
\begin{eqnarray}
 (\alpha_{n}\partial_{n}-\delta_{n}\xi_{n+1})(z_{n+1,\sigma}\oplus
l_{n+1,\sigma'}))=\partial_{n+1}
^{(n)}(y_{n+1,\sigma})\label{a27}.
\end{eqnarray}

\noindent Define $\chi$ by setting:
\begin{eqnarray}
\chi_{n+1} (z_{n+1,\sigma}\oplus
l_{n+1,\sigma'})&=&\xi_{n+1}(z_{n+1,\sigma}\oplus
l_{n+1,\sigma'})-y_{n+1,\sigma}\nonumber\\
\chi_{\leq n}& =&\alpha_{\leq n}\label{104}
\end{eqnarray}
Clearly  by using the relation (\ref{a27}) we have:
\begin{eqnarray*}
\delta_{n}\circ \chi_{n+1}(z_{n+1,\sigma}\oplus l_{n+1,\sigma'})
=\delta_{n}(\xi_{n+1}(z_{n+1,\sigma}\oplus
l_{n+1,\sigma'})-\delta_{n}(y_{n+1,\sigma}))
=\alpha_{n}\partial_{n}=\xi_{n}\partial_{n}
\end{eqnarray*}
so  $\chi$ is a dga morphism. Finally since the element
$y_{n+1,\sigma}\in T_{n+1}(W_{\leq n})$, from the formula we
deduce that the chain map $\widetilde{\chi}_{\leq n+1}:(V_{\leq
n+1},d)\to (W_{\leq n+1},d')$, induced by $\chi$ on the
indecomposables, coincides with the chain map   $\xi_{*}$, hence
(\ref{X13}) is satisfied
\end{proof}

Now from the last proposition  we derive the following theorem
which constitute  the  first main  result in this work.
\begin{theorem}
\label{X9} Two  perfect  dgas are quasi-isomorphic  if and only if
their Whitehead exact sequences are isomorphic.
\end{theorem}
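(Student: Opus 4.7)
The plan is to treat the two directions separately. The forward direction is a direct application of naturality and the five-lemma; the substance lies in the converse, where from an isomorphism of Whitehead sequences I would reconstruct a quasi-isomorphism by a skeleton-by-skeleton induction built on the adapted systems machinery of Propositions \ref{p1} and \ref{p2}. For $(\Rightarrow)$, any dga morphism $\alpha$ induces a morphism of Whitehead sequences via $\gamma_n^{\alpha}$ on $\Gamma_n^{T(V)}$, $H_n(\alpha)$ on $H_n(T(V))$, and $H_n(\widetilde{\alpha})$ on $H_n(V,d)$, with commutativity following from the naturality of the connecting $\beta_n$ and the projections involved in (\ref{8}). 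When $\alpha$ is a quasi-isomorphism, the middle isomorphism together with the five-lemma forces the other two induced maps to be isomorphisms, so the entire morphism of Whitehead sequences is an isomorphism.

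For $(\Leftarrow)$, given an isomorphism of Whitehead sequences consisting of isomorphisms $h_n: H_n(V,d) \to H_n(W,d')$, $f_n: \Gamma_n^{T(V)} \to \Gamma_n^{T(W)}$, and $g_n: H_n(T(V)) \to H_n(T(W))$, I would first lift $h_*$ to a chain map $\xi_*: (V,d) \to (W,d')$ realizing it on homology. The commutativity of $h_*$ and $f_*$ with the $b_{n+1}$ is precisely the data of diagram $(A)$, so once dga morphisms $\alpha^n$ between the truncations are in hand, the pair $(\xi_*, \alpha^n)$ automatically constitutes a morphism of adapted systems of order $n$. I then build inductively a compatible family of quasi-isomorphisms $\alpha^n: (T(V_{\leq n}),\partial^{(n)}) \to (T(W_{\leq n}),\delta^{(n)})$ for which each pair is in fact an equivalence of adapted systems. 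The base case at the smallest nontrivial $n$ is handled by taking $\alpha^n = T(\xi_{\leq n})$, which is a quasi-isomorphism because at this level the homology of the tensor algebra is controlled by the indecomposables. The inductive step is Proposition \ref{p2} applied to $(\xi_*, \alpha^n)$: it produces $\alpha^{n+1}$ extending $\alpha^n$, and the equivalence criterion in that proposition guarantees that $\alpha^{n+1}$ is itself a quasi-isomorphism.

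To finish, the compatible sequence $\{\alpha^n\}$ assembles into a dga morphism $\alpha: (T(V),\partial) \to (T(W),\delta)$. Since generators in degrees $\geq n+2$ cannot affect $H_n$, we have $H_n(T(V)) = H_n(T(V_{\leq n+1}))$, and the quasi-isomorphism property of each $\alpha^{n+1}$ in degree $n$ transfers directly to $\alpha$. The main obstacle I anticipate is the verification, at every iteration, that the morphism of adapted systems of order $n+1$ induced by the newly constructed $\alpha^{n+1}$ continues to match the prescribed isomorphism of Whitehead sequences in the next degree — that is, that $\gamma_{n+1}^{\alpha^{n+1}}$ coincides with the given $f_{n+1}$, perhaps up to an automorphism absorbable into the inherent freedom of the construction of $\chi$ in Proposition \ref{p2}. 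Handling this compatibility uses in an essential way the splitting $H_n(T(V_{\leq n})) \cong \Gamma_n^{T(V)} \oplus \ker\beta_n$ from (\ref{4}), available precisely because perfectness holds; it is this splitting that lets one read off $\gamma^{\alpha^{n+1}}_{n+1}$ from $H_{n+1}(\alpha^{n+1})$ and compare it to $f_{n+1}$.
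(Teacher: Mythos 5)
Your overall strategy coincides with the paper's: the forward direction via naturality and the five lemma, and the converse by lifting the given isomorphism $f_{*}$ on $H_{*}(V,d)$ to a chain map $\xi_{*}$ and then extending dga morphisms skeleton by skeleton through Proposition \ref{p2}, using the commutativity of the Whitehead-sequence diagram to check at each stage that $(\xi_{*},\alpha^{n})$ is a morphism of adapted systems. Up to that point the proposal is sound, and you are right to single out the compatibility of $\gamma_{n}^{\alpha^{n}}$ with the prescribed isomorphism on $\Gamma_{n}$ as the delicate point.

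The gap is in how you certify that the assembled map is a quasi-isomorphism. You require each truncation $\alpha^{n}\colon T(V_{\leq n})\to T(W_{\leq n})$ to be a quasi-isomorphism (starting from $T(\xi_{\leq n})$ at the base and propagating via the equivalence clause of Proposition \ref{p2}), and then transfer this to $\alpha$ using $H_{n}(T(V))=H_{n}(T(V_{\leq n+1}))$. But $H_{n}(T(V_{\leq n}))\cong\Gamma_{n}^{T(V)}\oplus\ker\beta_{n}$, and the summand $\ker\beta_{n}\subseteq V_{n}$ is presentation-dependent, not a quasi-isomorphism invariant of $(T(V),\partial)$. Concretely, take $V_{1}=0$, $V_{2}=\langle x,y\rangle$, $V_{3}=\langle z\rangle$ with $\partial z=y$, and $W_{1}=0$, $W_{2}=\langle x\rangle$, $W_{\geq 3}=0$: both dgas are perfect, $T(V)$ and $T(W)$ are quasi-isomorphic with isomorphic Whitehead exact sequences, yet $H_{2}(T(V_{\leq 2}))\cong R^{2}$ while $H_{2}(T(W_{\leq 2}))\cong R$, so no quasi-isomorphism between the $2$-truncations exists and your base case already fails. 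The invariant that can actually be propagated, and the one the paper tracks, is only $H_{\leq n}(\widetilde{\alpha}^{n+1})=f_{\leq n}$ on the homology of the indecomposables; the quasi-isomorphy of the full map $\alpha$ is then obtained in one stroke at the end from Moore's theorem (a morphism of free dgas inducing an isomorphism on $H_{*}$ of the indecomposables is a quasi-isomorphism). Replacing your stage-by-stage quasi-isomorphism claim with this bookkeeping repairs the argument.
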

\begin{proof}
Let $(T(V),\partial)$ and $(T(W),\delta)$ be two perfect  dgas
such that their Whitehead exact sequences are isomorphic, e.i; we
have the following commutative diagram:

\begin{picture}(300,90)(25,10)
\put(65,80){$\ldots \rightarrow
H_{n+1}(V,d)\overset{b_{n+1}}{\longrightarrow }\Gamma^{T(V)}
_{n}\longrightarrow H_{n}(T(V))\longrightarrow
H_{n}(V,d)\overset{b_{n}}{\longrightarrow }...$}
\put(245,76){$\vector(0,-1){46}$}
\put(186,76){$\vector(0,-1){46}$}
 \put(65,20){$\ldots \rightarrow
H_{n+1}(W,d')\overset{b'_{n+1}}{\longrightarrow }\Gamma^{T(W)}
_{n}\longrightarrow H_{n}(T(W))\longrightarrow
H_{n}(W,d')\overset{b'_{n}}{\longrightarrow }...$}
\put(110,76){$\vector(0,-1){46}$} \put(110,50){ $f_{n+1}$}
\put(330,50){ $f_{n}$} \put(330,76){$\vector(0,-1){46}$}
\put(187,50){ $\gamma_{n}$} \put(245,50){ $h_{n}$} \put(0,50){
$(B)$}
 \put(95,50){ $\cong$}\put(230,50){ $\cong$}
 \put(315,50){ $\cong$}\put(170,50){ $\cong$}
\end{picture}

\noindent First by the homotopy extension theorem
 \cite{MAL}, the given graded isomorphism
$f_{*}:H_{*}(V,d)\rightarrow H_{*}(W,d')$ yields  a chain map
$\xi_{*}:(V,d)\to (W,d')$ such that $H_{*}(\xi_{*})=f_{*}$.

\noindent  Assume, by induction, that we have constructed  a map
$\alpha^{n} :(T(V_{\leq n+1}),\partial^{(n+1)})\to (T(W_{\leq
n+1}),\delta^{(n+1)})$ such that $H_{\leq
n-1}(\widetilde{\alpha}^{n})=H_{\leq n-1}(\xi_{*})=f_{\leq n-1}$.
Now consider the following pair:
 $$(\xi_{*},\alpha^{n}):F_{n}(T(V),\partial)=\big((T(V),\partial
^{(n)}),b_{n+1}\big)\rightarrow \big((T(W),\delta
^{(n)}),b'_{n+1}\big)=F_{n}(T(W),\delta)$$

 \noindent By
the commutativity of the diagram $(B)$ we deduce that the above
pair is a morphism in $\Bbb{AD}_{n}$. Then  proposition \ref{p2}
provides   a chain map  $\alpha^{n+1}:(T(V_{\leq
n+1}),\partial^{(n+1)})\to (T(W_{\leq n+1}),\delta^{(n+1)})$
 satisfying $H_{\leq
n}(\widetilde{\alpha}^{n+1} )=H_{\leq n}(\xi_{*})=f_{\leq n}$.
Again by the commutativity of the diagram $(B)$ we deduce that the
pair:
$$(\xi_{*},\alpha^{n+1}):\big((T(V),\partial
^{(n+1)}),b_{n+2}\big)\rightarrow \big((T(W),\delta
^{(n+1)}),b'_{n+2}\big)$$
  is a morphism in $\Bbb{AD}_{n+1}$. Therefore   proposition \ref{p2}
provides a chain map  $\alpha^{n+2}:(T(V_{\leq
n+2}),\partial^{(n+2)})\to (T(W_{\leq n+2}),\delta^{(n+2)})$
satisfying $H_{\leq n+1}(\widetilde{\alpha}^{n+2} )=H_{\leq
n+1}(\xi_{*})=f_{\leq n+1}$. If we continue this iterative
construction  we will find a chain  map $\alpha:(T(V),\partial)\to
(T(W),\delta)$ satisfying the relation $H_{n}(\widetilde{\alpha}
)=f_{n}$ for all $n$. Finally since $f_{*}$ is an isomorphism,
then so is $H_{*}(\widetilde{\alpha})$ and Moore's theorem
\cite{M1} asserts that $\alpha$ is a quasi-isomorphism. For the
converse it is well-known that the Whitehead exact sequence is an
invariant of quasi-isomorphism.
\end{proof}
We can summarize propositions \ref{p1} and \ref{p2} in the
following theorem:
\begin{theorem}
\label{1t} The Functor $F_{n}^{n+1}$ provides a bijection between
the set of the quasi-isomorphism types in the category
$\mathbf{PDGA}_{n+1}$ and the set of isomorphism classes in the
category $\Bbb {AD}_{n}$

\end{theorem}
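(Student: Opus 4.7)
The plan is to verify that $F_n^{n+1}$ descends to a well-defined map on isomorphism classes, and that this induced map is both surjective and injective; all three properties will follow from Propositions \ref{p1} and \ref{p2} (already established) together with the compatibility of $F_n^{n+1}$ with quasi-isomorphisms. (I read the codomain as $\Bbb{AD}_n^{n+1}$, since that is where $F_n^{n+1}$ lands.)

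First I would check that $F_n^{n+1}$ preserves equivalences in the sense of Definition \ref{r3}. Concretely, if $\alpha \colon (T(V),\partial) \to (T(W),\delta)$ is a quasi-isomorphism of objects of $\mathbf{PDGA}_{n+1}$, then I want $F_n^{n+1}(\alpha) = (\widetilde{\alpha},\alpha^n)$ to be an equivalence. On the one hand, $\alpha^n$ is a quasi-isomorphism of truncated dgas: this follows by induction on $k \leq n$, comparing the long exact sequences of the pairs $(T(V_{\leq k}),T(V_{\leq k-1}))$ and $(T(W_{\leq k}),T(W_{\leq k-1}))$ via the five-lemma. On the other hand, $H_{\leq n+1}(\widetilde{\alpha})$ is an isomorphism: by the invariance of the Whitehead exact sequence under quasi-isomorphism (the converse direction of Theorem \ref{X9}), the two Whitehead sequences are isomorphic, and the five-lemma forces the map on $H_*(V,d)$ to be bijective.

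For surjectivity I invoke Proposition \ref{p1}: any adapted system $\big((T(V),\partial^{(n)}),b_{n+1}\big) \in \Bbb{AD}_n^{n+1}$ admits a realization as a perfect dga $(T(V_{\leq n+1}),\partial^{(n+1)}) \in \mathbf{PDGA}_{n+1}$ whose image under $F_n^{n+1}$ is the given adapted system, by relation (\ref{12}). For injectivity, I invoke Proposition \ref{p2}: if two objects $X = (T(V_{\leq n+1}),\partial^{(n+1)})$ and $Y = (T(W_{\leq n+1}),\delta^{(n+1)})$ of $\mathbf{PDGA}_{n+1}$ satisfy $F_n^{n+1}(X) \cong F_n^{n+1}(Y)$ via an equivalence $(\xi_*,\alpha)$, then Proposition \ref{p2} produces a dga morphism $\chi \colon X \to Y$ lifting $(\xi_*,\alpha)$, and its second assertion guarantees that $\chi$ is itself a quasi-isomorphism, placing $X$ and $Y$ in the same quasi-isomorphism type.

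The main obstacle is not located in this theorem but has already been absorbed into Proposition \ref{p2}: the delicate step was to build $\chi$ realizing simultaneously the prescribed chain map on indecomposables and the prescribed behaviour on the connecting homomorphism $b_{n+1}$, while remaining a strict dga morphism (and, in the equivalence case, a quasi-isomorphism). Granted Propositions \ref{p1} and \ref{p2}, Theorem \ref{1t} is essentially a formal repackaging, and the proof reduces to assembling the three steps above and observing that well-definedness plus bijectivity on objects yields the claimed bijection between quasi-isomorphism types in $\mathbf{PDGA}_{n+1}$ and isomorphism classes in $\Bbb{AD}_n^{n+1}$.
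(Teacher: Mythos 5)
Your overall architecture matches what the paper intends: the paper offers no actual proof of this theorem (it is introduced only as a ``summary'' of Propositions \ref{p1} and \ref{p2}), and your surjectivity and injectivity steps are exactly the right way to extract a bijection from those two propositions. You are also right to notice that a third ingredient is needed which the paper never supplies, namely that $F_n^{n+1}$ descends to quasi-isomorphism types at all.

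That well-definedness step is where there is a genuine gap. To show that $F_n^{n+1}(\alpha)=(\widetilde{\alpha},\alpha^{n})$ is an equivalence in the sense of Definition \ref{r3} you must show that $\alpha^{n}\colon (T(V_{\leq n}),\partial^{(n)})\to (T(W_{\leq n}),\delta^{(n)})$ is a quasi-isomorphism, and your five-lemma induction does not prove this: in the long exact sequence of the pair $(T(V_{\leq k}),T(V_{\leq k-1}))$ the relative term in degree $k$ is $V_{k}$ itself, so the five lemma would require $\widetilde{\alpha}_{k}\colon V_{k}\to W_{k}$ to be an isomorphism, whereas a quasi-isomorphism $\alpha$ only guarantees that $H_{*}(\widetilde{\alpha})$ is an isomorphism. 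In fact the claim fails without a further minimality-type hypothesis on $(V,d)$: take $V_{2}=\langle x\rangle$, $V_{3}=\langle y\rangle$, $\partial_{3}y=x$. This is a perfect object of $\mathbf{PDGA}_{3}$ (one checks $\varphi_{2}=0$ since $T_{3}(V_{\leq 2})=0$) and it is quasi-isomorphic to the trivial dga $R=T(0)$, yet $\alpha^{2}\colon T(x)\to R$ is not a quasi-isomorphism because $H_{2}(T(x))\neq 0$; hence the two associated adapted systems of order $2$ are not isomorphic even though the dgas are quasi-isomorphic. So the direction ``quasi-isomorphic implies isomorphic adapted systems'' cannot be recovered by your argument (nor, as stated, at all); the parts you derive from Propositions \ref{p1} and \ref{p2} are sound, but completing the proof requires either restricting to minimal chain complexes $(V,d)$ or reformulating the invariant so that it only remembers quasi-isomorphism data, as Theorem \ref{X9} does with the Whitehead exact sequence.
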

\subsection{Algorithm}
Let  $H_{*}$ be a graded $R$-module such that $H_{i}=0$ for $i=0$.
Denote by $S(H_{*})$ the set of quasi-isomorphism types of perfect
dgas $(T(V),\partial)$ satisfying $H_{*}(V,d)=H_{*}$. In this
section we shall use the results already obtained in the previous
paragraph
                                s in order to describe an algorithm that allows us to compute the
cardinal of
 $S(H_{*}).$

Indeed; First  choose   a free chain complex   $%
(C_{\ast },d)$ such that  $H_{\ast }=H_{\ast }(C_{\ast },d)$ and
consider $(T(V_{\leq 2}),d)$. Clearly $(T(V_{\leq 2}),d)$ is
perfect.

 An adapted  system is a family of homomorphisms $(b_{n})_{n\geq 3}$
defined in the following recursive manner:

\noindent  $b_{3}$ is a homomorphism $H_{3}\to
\Gamma_{2}^{T(V_{\leq 2})}$ so that  the pair $\big((T(V),\partial
^{(2)}),b_{3}\big)$  is an adapted  system of order 3. So by
proposition \ref{p1} we can get
 a perfect  dga  $(T(V_{\leq 3}),\partial^{(3)})$. Next
$b_{4}$ is a homomorphism $H_{4}\to \Gamma_{3}^{T(V_{\leq 3})}$ so
that the pair $\big((T(V),\partial ^{(3)}),b_{4}\big)$ is an
adapted system of order 4 and   proposition \ref{p1} provides
 a perfect  dga  $(T(V_{\leq 4}),\partial^{(4)})$. Assume
now that we have constructed  a perfect dga $(T(V_{\leq
n}),\partial^{(n)})$ by the process describe as above, then
$b_{n+1}$ is a homomorphism $H_{n+1}\to \Gamma_{n}^{T(V_{\leq
n})}$ which implies that the pair $\big((T(V),\partial
^{(n)}),b_{n+1}\big)$ is an adapted  system of order
 $n$. If we iterate this process we find a perfect dga $(T(V),\partial)$ satisfying for all $n$ the relation
$F_{n}(T(V),\partial)=\big((T(V),\partial ^{(n)}),b_{n+1}\big)$
where $F_{n}$ is the functor defined in (\ref{15}).

\noindent We  say that two adapted  systems $(b_{n})_{n\geq 3},
(b_{n}')_{n\geq 3}$ are equivalent, and we write $(b_{n})_{n\geq
3}\sim (b_{n}')_{n\geq 3}$, if there  exists an graded isomorphism
$f_{*}:H_{*}\to H_{*}$ making the following diagram commutes for
all $n\geq 2$:

\begin{picture}(300,80)(-80,-28)
\put(134,26){$\vector(0,-1){28}$} \put(57,27){$\vector(0,-1){28}$}
\put(50,-12){$H_{n+1}\hspace{1mm}\vector(1,0){50}\hspace{1mm}
\Gamma_{n}^{T(V))} $} \put(35,9){$f_{n+1}$} \put(136,9){$\cong$}
\put(50,30){$H_{n+1}\hspace{1mm}\vector(1,0){50}\hspace{1mm}
\Gamma_{n}^{T(V)}$} \put(85,34){$b_{n+1}$}
\put(85,-08){$b'_{n+1}$}
\end{picture}

\noindent Let  $[(b_{n})_{n\geq 3}]$ the  an equivalence class of
the family $(b_{n})_{n\geq 3}$.  From theorem \ref{1t} we deduce
that the Cardinal of the set $S(H_{*})$ is equal to the number of
such equivalence classes.

Now we shall illustrate  with examples how we can compute the
Cardinal of $S(H_{*})$ by using the algorithm described above.
\begin{example}
\label{25} -  If $H_{\leq n}$ is a graded abelian group such that
$H_{1}=H_{3}=\Bbb Z$, then   $S(H_{\leq n})$ is infinite. This
observation comes from  the fact that in this case we have an
infinity of homomorphisms $b_{3}\in Hom(H_{3},H_{1}\otimes
H_{1})=Hom(\Bbb Z,\Bbb Z)=\Bbb Z$ and  if $b_{3}\neq b'_{3}$, then
$(b_{n})_{n\geq 3}\nsim (b'_{n})_{n\geq 3}$. Recall that here we
have used that $\Gamma_{2}^{T(V)}=H_{1}\otimes H_{1}$. We conclude
that there exists an infinite classes $[(b_{n})_{n\geq 3}]$.
Therefore the set $S(H_{\leq n})$ is infinite for all $n\geq 4$.

-  If  graded module  $H_{\leq n}$ is such that  $H_{i}$ is finite
or all $i\leq n$, then  we deduce  that the  set  $S(H_{\leq n})$
is also finite.

- Let $H_{\leq 10}$ be an abelian graded group such that:
$$\begin{array}{lcrlcrlcrlcrlcrlcr}
H_{3}&=&H_{7}&=&\mathbb{Z}_{2},&&H_{4}&=&\mathbb{Z}_{3},&&H_{8}&=&\mathbb{Z}_{4}\\
H_{6}&=&H_{5}&=&\mathbb{Z}_{5},&&H_{9}&=&\mathbb{Z}_{3},&&H_{10}&=&\mathbb{Z},\hspace{10mm}H_{i}=0
\text{ otherwise}
\end{array}$$
Then Card  $S(H_{*})=18.$ Indeed; on the one hand by the above
algorithm we know that   Card $S(H_{*})$ is equal to the number of
equivalence  classes
$[(b_{10},b_{9},b_{8},b_{7},b_{6},b_{5},b_{4},b_{3})]$, where
$b_{i}\in Hom(H_{i},\Gamma_{i-1})$. On the other hand in
(\cite{ben3}, th.13 p. 127) il is shown that:
\begin{eqnarray*}
\Gamma_{6}&=&H_{3}\otimes H_{3}\,\,\,,\hspace{1cm}\Gamma_{i}=0\,\,,\,\,\forall i\leq 5\\
\Gamma_{7}&=&H_{3}\otimes H_{4}\oplus H_{4}\otimes H_{3}\oplus Tor(H_{3}, H_{3})\\
\Gamma_{8}&=&H_{3}\otimes H_{5}\oplus H_{4}\otimes H_{4}\oplus
H_{5}\otimes H_{3}\oplus Tor(H_{3}, H_{4})\oplus
Tor(H_{4}, H_{3})\\
\Gamma_{9}&=&H_{3}\otimes H_{6}\oplus H_{4}\otimes H_{5}\oplus
H_{5}\otimes H_{4}\oplus H_{3}\otimes H_{6}\oplus
Tor(H_{3}, H_{5})\oplus\\
&& Tor(H_{4}, H_{4})\oplus Tor(H_{5}, H_{3})\oplus
\frac{H_{3}\otimes H_{3}\otimes H_{3}}{\mathrm{Im}\,b_{7}\otimes
H_{3}+H_{3}\otimes \mathrm{Im}\,b_{7}}
\end{eqnarray*}
where $b_{7}\in Hom(H_{7},\Gamma_{6})$, therefore according to the
hypothesis we derive:
\begin{eqnarray*}
\Gamma_{6}&=&\Bbb
Z_{2}\hspace{1cm}\Gamma_{7}=\mathbb{Z}_{2}\hspace{1cm}\Gamma_{8}=\mathbb{Z}_{3}
\end{eqnarray*}
Now since  $\Gamma_{i}=0\,,\,\forall i\leq 5$ we deduce that
$b_{i}\,,\,\forall i\leq 6.$ So we shall compute  the adapted
systems on form $(b_{10},b_{9},b_{8},b_{7},0,0,0,0)$. A simple
computation  shows that
$Hom(H_{7},\Gamma_{6})=Hom(\mathbb{Z}_{2},\mathbb{Z}_{2})=\mathbb{Z}_{2},$
it results from this two  homomorphisms $b^{(1)}_{7}=1$ and
$b^{(0)}_{7}=0$ for which  correspond two  adapted systems
 $(b_{10},b_{9},b_{8},0,0,0,0,0)$,
$(b_{10},b_{9},b_{8},1,0,0,0,0)$.

\noindent Next we have
$Hom(H_{8},\Gamma_{7})=Hom(\mathbb{Z}_{4},\mathbb{Z}_{2})=\mathbb{Z}_{2}$,
so we find   two homomorphisms $b^{(1)}_{8}=1$ et $b^{(0)}_{8}=0$
for which correspond 4 adapted systems which are:
$$
\begin{array}{lcrlcrlcrlcrlcrlcr}
(b_{10},b_{9},0,0,0,0,0,0)&\hspace{1cm}&(b_{10},b_{9},1,0,0,0,0,0)\\
(b_{10},b_{9},0,1,0,0,0,0)&\hspace{1cm}&(b_{10},b_{9},1,1,0,0,0,0).
\end{array}
$$
Next  we have
$Hom(H_{9},\Gamma_{8})=Hom(\mathbb{Z}_{6},\mathbb{Z}_{3})=\mathbb{Z}_{3}$
so we get 3 homomorphisms $b^{(0)}_{9}=1$, $b^{(1)}_{9}=0$ and
$b^{(2)}_{9}=0$ for which  correspond 12 adapted systems which
are:
$$
\begin{array}{lcrlcrlcrlcrlcrlcr}
(b_{10},0,0,0,0,0,0,0)&\hspace{1cm}&(b_{10},1,0,0,0,0,0,0)&\hspace{1cm}&(b_{10},2,0,0,0,0,0,0)\\
(b_{10},0,1,0,0,0,0,0)&\hspace{1cm}&(b_{10},1,1,0,0,0,0,0)&\hspace{1cm}&(b_{10},2,1,0,0,0,0,0)\\
(b_{10},0,0,1,0,0,0,0)&\hspace{1cm}&(b_{10},1,0,1,0,0,0,0)&\hspace{1cm}&(b_{10},2,0,1,0,0,0,0)\\
(b_{10},0,1,1,0,0,0,0)&\hspace{1cm}&(b_{10},1,1,1,0,0,0,0)&\hspace{1cm}&(b_{10},2,1,1,0,0,0,0)
\end{array}
$$
Now we shall determine $Hom(H_{10},\Gamma_{9})$. Since
$\Gamma_{9}$ depends on the homomorphism $b_{7}$
 we deduce  that we have two abelian
 groups
$\Gamma^{b^{(0)}_{7}}_{9}=\mathbb{Z}_{2}$ et
$\Gamma^{b^{(1)}_{7}}_{9}=0$,  so we get:
$$
\begin{array}{lcrlcrlcrlcrlcrlcr}
Hom(H_{10},\Gamma^{b^{(0)}_{7}}_{9})&=&Hom(\mathbb{Z},\mathbb{Z}_{2})&=&\mathbb{Z}\\
Hom(H_{10},\Gamma^{b^{(1)}_{7}}_{9})&=&Hom(\mathbb{Z},0)&=&0
\end{array}
$$
Hence from the first  case results two  homomorphisms
$b^{(0)}_{10}=1$, $b^{(1)}_{10}=0$ for which  correspond 12
adapted systems which are:
$$
\begin{array}{lcrlcrlcrlcrlcrlcr}
(0,0,0,0,0,0,0,0)&\hspace{-3mm}&(1,0,0,0,0,0,0,0)&\hspace{-3mm}&(0,0,1,0,0,0,0,0)&\hspace{-3mm}&(1,0,1,0,0,0,0,0)\\
(0,1,0,0,0,0,0,0)&\hspace{-3mm}&(1,1,0,0,0,0,0,0)&\hspace{-3mm}&(0,1,1,0,0,0,0,0)&\hspace{-3mm}&(1,1,1,0,0,0,0,0)\\
(0,2,0,0,0,0,0,0)&\hspace{-3mm}&(1,2,0,0,0,0,0,0)&\hspace{-3mm}&(0,2,1,0,0,0,0,0)&\hspace{-3mm}&(1,2,1,0,0,0,0,0)
\end{array}
$$
and from the second case we only derive  the nil homomorphism for
which  corresponds 6 adapted systems which are:
$$
\begin{array}{lcrlcrlcrlcrlcrlcr}
(0,0,0,1,0,0,0,0)&\hspace{1cm}&(0,1,0,1,0,0,0,0)&\hspace{1cm}&(0,1,1,1,0,0,0,0)\\
(0,2,0,1,0,0,0,0)&\hspace{1cm}&(0,0,1,1,0,0,0,0)&\hspace{1cm}&(0,2,1,1,0,0,0,0)
\end{array}
$$
In total we  get  18 adapted systems which are obviously not
equivalent,  so we deduce that  Card $S(H_{\leq 10})=18$.
\end{example}
\section{General case }
 Our  aim now is to generalize the precedent results to any free dga  ( not necessary perfect ).  Let $(T(V),\partial)$ be a simply connected CW-complex $X$ and let:
\begin{equation*}
\cdots \rightarrow H_{n+1}(V,d)\overset{b_{n+1}}{\longrightarrow }%
\Gamma ^{T(V)}_{n}\rightarrow H_{n}(T(V))\rightarrow H_{n}(V,d)\overset{%
b_{n}}{\longrightarrow }\cdots
\end{equation*}
its Whitehead exact sequence. We start this section by proving the
following
 proposition:
\begin{proposition}
\label{2} To every free dga  $(T(V),\partial)$ corresponds a pair
$\big((T(V),\widetilde{\partial}),(\pi_{n})_{n\geq 2}\big)$
consisting with a perfect dga $(T(V),\widetilde{\partial})$ and a
family of extensions $(\pi_{n})_{n\geq 2}$, where $\pi_{n}\in
\mathrm{Ext}(H_{n}(V,d),\mathrm{Coker}\,b_{n+1}).$
\end{proposition}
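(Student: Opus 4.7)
The plan is to run the recursion of Proposition \ref{p1} on the adapted-system data carried by $(T(V),\partial)$, producing a perfect replacement $(T(V),\widetilde{\partial})$, while reading off each $\pi_n$ directly from the homomorphism $\varphi_n$ of Remark \ref{20} associated to the original $\partial$.

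I first extract from $(T(V),\partial)$ the chain complex of indecomposables $(V,d)$, the graded modules $\Gamma^{T(V)}_n$, and the connecting homomorphisms $b_{n+1}:H_{n+1}(V,d)\to \Gamma^{T(V)}_n$ of its Whitehead exact sequence. These immediately determine the target groups $\mathrm{Ext}(H_n(V,d),\mathrm{Coker}\,b_{n+1})$ in which the $\pi_n$ must live. I then construct $\widetilde{\partial}$ by induction on $n$, starting with $\widetilde{\partial}^{(2)}=\partial$ on $T(V_{\leq 2})$, which is automatically perfect by Remark \ref{29}. Assuming $(T(V_{\leq n}),\widetilde{\partial}^{(n)})$ is perfect and the triple $\big((T(V),\widetilde{\partial}^{(n)}),b_{n+1}\big)$ is an adapted system of order $n$, Proposition \ref{p1} produces a perfect extension $(T(V_{\leq n+1}),\widetilde{\partial}^{(n+1)})$: on the decomposition $V_{n+1}\cong (\mathrm{Im}\,d_{n+1})'\oplus\ker d_{n+1}$ of (\ref{a1}), the $\ker d_{n+1}$-component of $\widetilde{\partial}_{n+1}$ is a lift of $b_{n+1}$, and the $(\mathrm{Im}\,d_{n+1})'$-component is $d_{n+1}(l_{n+1,\sigma'})-a_{n+1,\sigma'}$ with $a_{n+1,\sigma'}$ chosen via the analogue of (\ref{27}). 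Passing to the colimit yields the desired perfect dga $(T(V),\widetilde{\partial})$.

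The family $(\pi_n)_{n\geq 2}$ is then read off from the original $\partial$: by Remark \ref{20}, the homomorphism $\varphi_n:(\mathrm{Im}\,d_{n+1})'\to \Gamma^{T(V)}_n$ coming from the $\widetilde{q}$-terms of $\partial$, composed with the projection onto $\mathrm{Coker}\,b_{n+1}$, yields $\overline{\varphi_n}$, whose class $\pi_n:=[\overline{\varphi_n}]$ lies in $\mathrm{Ext}(H_n(V,d),\mathrm{Coker}\,b_{n+1})$ via (\ref{14}).

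The main obstacle is a compatibility check needed to make the induction honest. The adapted-system input at each stage requires $b_{n+1}$ to take values in $\Gamma^{T(V_{\leq n}),\widetilde{\partial}^{(n)}}_n$, but the $b_{n+1}$ extracted from $(T(V),\partial)$ naturally targets $\Gamma^{T(V_{\leq n}),\partial^{(n)}}_n$; these two modules must be canonically identified along the induction. This holds because $\Gamma_n$ and the connecting map $b_{n+1}$ are determined via formula (\ref{8}) by the $\ker d_{n+1}$-component of the differential, and the construction preserves exactly this component at every level. Independence of $\pi_n$ from the chosen basis of $(\mathrm{Im}\,d_{n+1})'$ and from the splitting (\ref{a1}) is the standard fact that the quotient on the right-hand side of (\ref{14}) computes $\mathrm{Ext}^1_R(H_n(V,d),\mathrm{Coker}\,b_{n+1})$ intrinsically.
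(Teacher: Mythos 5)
Your proposal follows essentially the same route as the paper: split the Whitehead sequence of $(T(V),\partial)$ into the homomorphisms $b_{n+1}$ and the extensions $\pi_{n}=[\overline{\varphi_{n}}]$ of Remark~\ref{20}, then build $(T(V),\widetilde{\partial})$ inductively by feeding the adapted system $\big((T(V),\widetilde{\partial}^{(n)}),b_{n+1}\big)$ into Proposition~\ref{p1}. Your explicit compatibility check identifying $\Gamma^{T(V_{\leq n})}_{n}$ for $\widetilde{\partial}^{(n)}$ with that for $\partial^{(n)}$ is a point the paper leaves implicit (cf.\ Remark~\ref{42}), and it is a welcome addition rather than a deviation.
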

\begin{proof}
The Whitehead exact sequence associated with $(T(V),\partial)$ can
split in order to give  a family of  homomorphisms
$\{b_{n+1}:H_{n+1}(V,d)\to \Gamma^{T(V)} _{n}\}_{n\geq 2}$ and a
family of extensions $\{[\mathrm{Coker}\,b_{n+1}\rightarrowtail
H_{n}(T(V))\twoheadrightarrow \ker b_{n}]\}_{n\geq 2}$.

\noindent But according to  remark \ref{20} we know that for every
short exact sequence  $\mathrm{Coker}\,b_{n+1}\rightarrowtail
H_{n}(T(V))\twoheadrightarrow \ker b_{n}$ corresponds a homomorphism $\overline{\varphi _{n}}:(\mathrm{Im}\,d_{n+1})'\overset{\varphi _{n}%
}{\rightarrow }\Gamma^{T(V)} _{n}\twoheadrightarrow \mathrm{Coker}%
\,b_{n+1}$ and by using  the resolution of $H_{n}(V,d)$ given in
(\ref{19}) the homomorphism $\overline{\varphi _{n}}$
defines an extension $\pi_{n}\in Ext_{R}(H_{n}(V,d),\mathrm{Coker}%
\,b_{n+1}).$

\noindent Finally suppose that we have constructed, by induction,
the perfect  dga $(T(V_{\leq n}),\widetilde{\partial}^{(n)})$
which is the sub-dga   of $(T(V),\widetilde{\partial})$. Then the
homomorphism $b_{n+1}:H_{n+1}(V,d)\to \Gamma^{T(V)} _{n}$ implies
that $\big((T(V),\widetilde{\partial} ^{(n)}),b_{n+1}\big)$
 is an adapted  system of order $n$, so by proposition 2 we
get a perfect  dga  $(T(V_{\leq
n+1}),\widetilde{\partial}^{(n+1)})$ etc...
\end{proof}
\begin{definition}
\label{38}  $\big((T(V),\widetilde{\partial}),(\pi_{n})_{n\geq
2}\big)$ is called a characteristic pair  of  $(T(V),\partial)$.
\end{definition}
\begin{remark}
\label{42} It is important to note that the graded homomorphism
$b_{*}$ which appears  in the Whitehead sequence associated with
$(T(V),\widetilde{\partial})$ is equal to the one's which appears
in  the Whitehead sequence associated with $(T(V),\partial)$. The
difference between the two these sequences is that  the first one
splits, e.i; the extension $\mathrm{Coker}\,b_{n+1}\rightarrowtail
H_{n}((T(V),\widetilde{\partial}))\twoheadrightarrow \ker b_{n}$
is trivial for all $n\geq 2$, while the second one is not,  e.i;
the extension $\mathrm{Coker}\,b_{n+1}\rightarrowtail
H_{n}((T(V),\partial))\twoheadrightarrow \ker b_{n}$ is not
trivial in general. It follows that if  $(T(V),\partial)$ and
$(T(W),\delta)$ are quasi-isomorphic, then their Whitehead exact
sequences are isomorphic and  so are the Whitehead exact sequences
associated with $(T(V),\widetilde{\partial})$ and
$(T(W),\widetilde{\delta})$, therefore by theorem 1 we deduce that
$(T(V),\widetilde{\partial})$ and $(T(W),\widetilde{\delta})$ are
quasi-isomorphic. Hence the perfect dga associated with a free dga
is unique, up to quasi-isomorphism.
\end{remark}

A natural question arises from the  remark \ref{42}: if
$(T(V),\widetilde{\partial})$ and $(T(W),\widetilde{\delta})$ are
quasi-isomorphic  what condition  should-we add in order to have a
quasi-isomorphism between $(T(V),\partial)$ and $(T(W),\delta)$?
The following paragraph is devoted to answer this question.
\subsection{The category $\mathbf{C}$}
In this paragraph we define a new  category $\mathbf{C}$ as
follows:

\noindent \textbf{Object}: is a pair
$\big((T(V),\widetilde{\partial}),(\pi_{n})_{n\geq 2}\big)$ where
$(T(V),\widetilde{\partial})$ is a perfect  dga and where
$\pi_{n}\in
Ext(H_{n}(V,d),\mathrm{Coker}\,b_{n+1})$ for all $n\geq 2$. Recall that the graded homomorphism $b_{*}$ is given by the Whitehead exact sequence associated with $(T(V),\widetilde{\partial})$.\\

\noindent \textbf{Morphism}: a morphism
$\alpha:\big((T(V),\widetilde{\partial}),(\pi_{n})_{n\geq
2}\big)\rightarrow
\big((T(W),\widetilde{\delta}),(\pi'_{n})_{n\geq 2}\big)$ between
two objects in $\mathbf{C}$ is a dga morphism
$\alpha:(T(V),\widetilde{\partial})\rightarrow
(T(W),\widetilde{\delta})$ satisfying:
\begin{equation}
(H_{n}(\alpha)^{\ast })(\pi' _{n})=(\gamma _{n}^{\alpha})_{\ast
}(\pi _{n}),\,\,\,\,\forall n\geq 2\label{47}
\end{equation}
where the homomorphism $\widetilde{\gamma} _{n}^{\alpha}$ is
induced by $H _{n}(\alpha)$ on the quotient group
$\mathrm{Coker}\,b_{n+1}$ and where $(H_{n}(\alpha))^{\ast }$ ,
$(\widetilde{\gamma} _{n}^{\alpha})_{\ast }$ are such that:
\begin{eqnarray*}
(\widetilde{\gamma }_{n}^{\alpha})_{\ast }
&:&\mathrm{Ext}(H_{n}(V,d),\mathrm{Coker}\,b_{n+1})\rightarrow
\mathrm{Ext}(H_{n}(V,d),\mathrm{Coker}
\,b'_{n+1}) \\
(H_{n}(\alpha))^{\ast }
&:&\mathrm{Ext}(H_{n}(W,d'),\mathrm{Coker}\,b'_{n+1})\rightarrow
\mathrm{Ext}(H_{n}(V,d),\mathrm{Coker}\,b'_{n+1})
\end{eqnarray*}
\begin{remark}
\label{A2} The  relation (\ref{47}) means the following:

 \noindent Choose the two resolution  $(\mathrm{Im}\,d_{n+1})^{\prime }\overset{d_{n+1}}{\rightarrow
}\ker d_{n}\twoheadrightarrow H_{n}(V,d)$ and
$(\mathrm{Im}\,d'_{n+1})'\overset{d'_{n+1}}{%
\rightarrow }\ker d'_{n}\twoheadrightarrow H_{n}(W,d')$   of
$H_{n}(V,d)$ and $H_{n}(W,d')$ given in (\ref{19}). For the given
extensions $\pi _{n}$ and $\pi' _{n}$ correspond the following
commutative two diagrams:

\begin{picture}(300,130)(-30,0)

\put(20,98){$\vector(0,-1){39}$} \put(-05,102){$
(\mathrm{Im}\,d_{n+1})'\overset{d_{n+1}}{\rightarrowtail }\ker
d_{n}\twoheadrightarrow H_{n}(V,d)$}
\put(210,50){$(\mathrm{Im}\,d'_{n+1})'\overset{d'_{n+1}}{\rightarrowtail
}\ker d'_{n}\twoheadrightarrow H_{n}(W,d')$}
\put(210,102){$(\mathrm{Im}\,d_{n+1})'\overset{d_{n+1}}{\rightarrowtail
}\ker d_{n}\twoheadrightarrow H_{n}(V,d)$}
\put(0,-01){$\mathrm{Coker}\,b'_{n+1}\underset{pr'}{\twoheadleftarrow}
\Gamma_{n}^{T(W)}$} \put(0,50){$\mathrm{Coker}
\,b_{n+1}\underset{pr}{\twoheadleftarrow} \Gamma_{n}^{T(V)}$}
\put(6,78){$\widetilde{\varphi} _{n}$}
\put(20,47){$\vector(0,-1){39}$}\put(56,78){$\varphi _{n}$}
\put(208,-01){$\mathrm{Coker}
\,b'_{n+1}\underset{pr'}{\twoheadleftarrow} \Gamma_{n}^{T(W)}$}
\put(225,98){$\vector(0,-1){39}$} \put(76,47){$\vector(0,-1){38}$}
\put(225,47){$\vector(0,-1){39}$}
 \put(6,25){$\widetilde{\gamma}_{n}^{\alpha}$}
 \put(78,25){$\gamma_{n}^{\alpha}$}
 \put(209,25){$\widetilde{\varphi'}_{n}$} \put(266,25){$\varphi'_{n}$}
\put(203,78){$\xi_{n+1}$}
\put(349,78){$H_{n}(\widetilde{\alpha})$}
\put(25,97){$\vector(4,-3){48}$} \put(232,47){$\vector(4,-3){48}$}
\put(345,98){$\vector(0,-1){39}$} \put(-55,50){$(C)$}
\end{picture}

\noindent where: $$[\widetilde{\varphi} _{n}]\in Ext_{R}^{1}(H_{n}(V,d),\mathrm{Coker}%
\,b_{n+1})=\frac{Hom\big((\mathrm{Im}\,d_{n+1})^{\prime },\mathrm{Coker}%
\,b_{n+1}\big)}{(d_{n+1})^{*}\big(Hom(\ker d_{n},\mathrm{Coker}%
\,b_{n+1})\big)}$$  $$[\widetilde{\varphi'}_{n}]\in Ext_{R}^{1}(H_{n}(W,d'),\mathrm{Coker}%
\,b_{n+1})=\frac{Hom\big((\mathrm{Im}\,d_{n+1})^{\prime },\mathrm{Coker}%
\,b'_{n+1}\big)}{(d'_{n+1})^{*}\big(Hom(\ker d'_{n},\mathrm{Coker}%
\,b'_{n+1})\big)}$$ are such that $[\widetilde{\varphi
}_{n}]=\pi_{n}$ and $[\widetilde{\varphi'} _{n}]=\pi'_{n}$.

\noindent It is well-known that the homomorphisms
$(\widetilde{\gamma _{n}^{\alpha }})_{\ast }$ and $(H_{n}(\alpha))
^{\ast }$ are defined by the following formulas:
\begin{equation*}
(\widetilde{\gamma _{n}^{\alpha }})_{\ast }(\pi
_{n})=[\widetilde{\gamma _{n}^{\alpha }}\circ \varphi _{n}]
\hspace{1cm} (H_{n}(\alpha))^{\ast }(\pi' _{n})=[\varphi'
_{n}\circ \xi _{n+1}].
\end{equation*}
So  the  relation (\ref{47}) is equivalent to:
\begin{equation*}
\lbrack \widetilde{\gamma}_{n}^{\alpha}\circ \widetilde{\varphi} _{n}]=[%
\widetilde{\varphi'} _{n}\circ \xi _{n+1}]\hspace{2mm}\mathrm{in}%
\hspace{2mm}\mathrm{Ext}(H_{n}(V,d),\mathrm{Coker}\,b'_{n+1})
\end{equation*}
or  there exists a homomorphism $h_{n} :\ker d_{n} \longrightarrow
\mathrm{Coker} \,b'_{n+1}$ such that:
\begin{equation}
\widetilde{\gamma} _{n}^{\alpha }\circ \widetilde{\varphi}
_{n}-\widetilde{\varphi' }_{n}\circ \xi _{n+1}=h_{n}\circ d_{n+1}.
\label{44}
\end{equation}
Finally since   $\ker d_{n}$ is free,  $h_{n}$ induces a
homomorphism $\widetilde{h}_{n}$ making the following diagram
commutes:

\begin{picture}(300,90)(35,30)
\put(120,98){$\vector(0,-1){42}$}
\put(150,101){$\vector(3,-1){140}$}
 \put(221,80){$\widetilde{h}_{n}$}
\put(110,102){$\ker d_{n+1}$}
\put(95,45){$\mathrm{Coker}\,b_{n+1}\overset{pr}{\twoheadleftarrow}\Gamma_{n}^{T(V)}\overset{}{\twoheadleftarrow}
\Gamma_{n}^{T(V)}\oplus \ker \beta_{n}\cong H_{n}(T(V_{\leq n})$}
\put(107,78){$h_{n}$}
\end{picture}

\noindent Or on the other word, according to the diagram (C),
that:
\begin{equation}
\mathrm{Im}\,(\gamma _{n}^{\alpha }\circ \varphi _{n}-\varphi'
_{n}\circ \xi _{n+1}-\widetilde{h}_{n}\circ d_{n+1})\subset
\mathrm{Im}\,b' _{n+1}. \label{18}
\end{equation}
\end{remark}

Our aim now is to define a functor from the category $\mathbf {C}$
to the category $\mathbf {DGA}$ of the free dgas. We begin by the
following propositions.
\begin{proposition}
\label{49}  For every object
$\big((T(V),\widetilde{\partial}),(\pi_{n})_{n\geq 2}\big)$ in
$\mathbf {C}$, there exists a free dga $(T(V),\partial)$ such that
$\big((T(V),\widetilde{\partial}),(\pi_{n})_{n\geq 2}\big)$ is a
characteristic pair of $(T(V),\partial)$ (see definition
\ref{2}).
\end{proposition}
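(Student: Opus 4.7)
The plan is to construct $(T(V),\partial)$ by modifying the differential $\widetilde{\partial}$ of the given perfect dga, adding correction cycles that realize the prescribed extensions $(\pi_n)$. Concretely, I would work inductively on $n\geq 2$. At each stage, use the decomposition $V_{n+1}\cong (\mathrm{Im}\,d_{n+1})'\oplus \ker d_{n+1}$ from \eqref{a1} and fix bases $(z_{n+1,\sigma})$ of $\ker d_{n+1}$ and $(l_{n+1,\sigma'})$ of $(\mathrm{Im}\,d_{n+1})'$. For each $n$, the extension class $\pi_n\in\mathrm{Ext}(H_n(V,d),\mathrm{Coker}\,b_{n+1})$ is represented, via the resolution \eqref{19}, by some homomorphism $\overline{\varphi_n}:(\mathrm{Im}\,d_{n+1})'\to \mathrm{Coker}\,b_{n+1}$; lift this to a homomorphism $\varphi_n:(\mathrm{Im}\,d_{n+1})'\to \Gamma_n^{T(V_{\leq n})}$ (using the surjection $\mathrm{pr}:\Gamma_n^{T(V_{\leq n})}\twoheadrightarrow \mathrm{Coker}\,b_{n+1}$ and freeness of $(\mathrm{Im}\,d_{n+1})'$), and then, via the projection \eqref{32}, choose for each $\sigma'$ an $n$-cycle $\widetilde{q}_{l_{n+1,\sigma'}}\in Z_n(T(V_{\leq n-1}))$ whose class modulo $\mathrm{Im}\,\widetilde{\partial}_{n+1}+\mathrm{Im}\,\phi_{n+1}$ equals $\varphi_n(l_{n+1,\sigma'})$.

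Next I would define $\partial$ on $T(V)$ by setting $\partial=\widetilde{\partial}$ on $V_{\leq n}$ already constructed, and by putting
\begin{equation*}
\partial_{n+1}(z_{n+1,\sigma})=\widetilde{\partial}_{n+1}(z_{n+1,\sigma}),\qquad
\partial_{n+1}(l_{n+1,\sigma'})=\widetilde{\partial}_{n+1}(l_{n+1,\sigma'})+\widetilde{q}_{l_{n+1,\sigma'}}.
\end{equation*}
Because only $n$-cycles are added, $\partial_n\circ\partial_{n+1}=\widetilde{\partial}_n\circ\widetilde{\partial}_{n+1}=0$, so $\partial^2=0$; moreover the linear part of $\partial$ coincides with $d$, so the chain complex of indecomposables, and hence $H_*(V,d)$ and the maps $b_{n+1}$, are unchanged.

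I would then verify that the characteristic pair produced from $(T(V),\partial)$ by the procedure of Proposition~\ref{2} recovers exactly $((T(V),\widetilde{\partial}),(\pi_n))$. For this, observe that by construction
$\partial_{n+1}(l_{n+1,\sigma'})=d_{n+1}(l_{n+1,\sigma'})+q_{l_{n+1,\sigma'}}+\widetilde{q}_{l_{n+1,\sigma'}}$
in the sense of Remark~\ref{20}, with the same $q$--term as in $\widetilde{\partial}$ and the added $\widetilde{q}$--term chosen above. Hence the homomorphism $\varphi_n$ associated with $(T(V),\partial)$ via Remark~\ref{20} agrees with the chosen lift, so $\overline{\varphi_n}$ represents precisely $\pi_n$. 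Running the inductive construction of the perfect dga inside Proposition~\ref{2} then yields back $(T(V),\widetilde{\partial})$, since the perfect dga is determined by the family $(b_{n+1})$ and the underlying chain complex, both of which were preserved.

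The main obstacle is bookkeeping around the choice of representatives: one must ensure that the cycles $\widetilde{q}_{l_{n+1,\sigma'}}$ can be chosen coherently so that altering the differential at stage $n$ does not destroy the extension class produced at earlier stages, and that the induced $\varphi_n$ on $(T(V),\partial)$ really equals the chosen lift (and not a different representative of the same $\mathrm{Ext}$ class). This is handled by doing the construction strictly in increasing order of $n$ and by noting that two representatives of $\pi_n$ differ by $h\circ d_{n+1}$ for some $h:\ker d_n\to\mathrm{Coker}\,b_{n+1}$, which corresponds to a change of basis in $T_n(V_{\leq n-1})$ that does not affect the extension class. No further obstruction appears because freeness of $(\mathrm{Im}\,d_{n+1})'$ guarantees the existence of the lift $\varphi_n$ of $\overline{\varphi_n}$ at every stage.
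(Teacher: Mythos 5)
Your construction is essentially identical to the paper's: the paper likewise lifts the representative $\overline{\varphi_n}$ of $\pi_n$ through $\Gamma_n^{T(V)}$ to a homomorphism $h_n$ landing in the $n$-cycles and perturbs the perfect differential by $\partial_{n+1}^{(n+1)}=\widetilde{\partial}_{n+1}^{(n+1)}+h_n$ on $(\mathrm{Im}\,d_{n+1})'$, with $\partial^2=0$ for the same reason you give. Your added care about recovering $(T(V),\widetilde{\partial})$ and the class $\pi_n$ from the resulting dga is exactly what the paper leaves as "easy to see," so the proposal is correct and follows the paper's route.
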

\begin{proof}
Indeed; for every  $n$  the sub-algebra $(T(V_{\leq
n}),\partial^{(n)})$ of $(T(V),\partial)$ is defined as follows:
choose $(\mathrm{Im}\,d_{n+1})^{\prime
}\overset{d_{n+1}}{\rightarrow }\ker d_{n}\twoheadrightarrow
H_{n}(V,d)$ as a
 resolution  of $H_{n}(V,d)$. The given
 extension $\pi_{n}$ provides two homomorphisms
 $\widetilde{\varphi}_{n}$, $\varphi_{n}$ and $h_{n}$  making the following diagram commutes:

\begin{picture}(300,100)(25,30)
\put(120,98){$\vector(0,-1){42}$}
\put(140,98){$\vector(3,-2){60}$} \put(170,80){$\varphi_{n}$}
\put(115,102){$(\mathrm{Im}\,d_{n+1})'\hspace{1mm}\vector(1,0){75}\hspace{1mm}Z_{n}(T(V_{\leq
n}))$}
\put(115,45){$\mathrm{Coker}\,b_{n+1}\overset{pr}{\longleftarrow}\Gamma_{n}^{T(V)}\overset{pr}{\longleftarrow}H_{n}(T(V_{\leq
n}))$} \put(107,78){$\widetilde{\varphi}_{n}$}
\put(255,98){$\vector(0,-1){42}$} \put(257,78){$pr$}
\put(197,105){$h_{n}$}
\end{picture}

\noindent Here $Z_{n}(T(V_{\leq n}))$ denotes the sub-module of
the $n$-cycles of the dga $(T(V),\widetilde{\partial})$.

\noindent First let us use the decomposition $V_{n+1}\cong \ker
d_{n+1} \oplus (\mathrm{Im}\,d_{n+1})'$ given by the relation
(\ref{28}) and let us choose $(z_{n+1,\sigma})_{\sigma\in \Sigma}$
and $(l_{n+1,\sigma'})_{\sigma'\in \Sigma'}$ as two bases of $\ker
d_{n+1}$ and $(\mathrm{Im}\,d_{n+1})'$ respectively. Now  we
define the differential  $\partial^{(n+1)}$ on $T(V_{\leq n+1})$
by setting:

$$\partial_{n+1}^{(n+1)}(z_{n+1,\sigma}+l_{n+1,\sigma'})=\widetilde{\partial}^{(n+1)}_{n+1}(z_{n+1,\sigma}+l_{n+1,\sigma'})+h_{n}(l_{n+1,\sigma'})$$
Since $\mathrm{Im}\,h_{n}\subset\,Z_{n}(T(V_{\leq n}))$ we deduce
that $\partial^{(n+1)}$ is a differential.

\noindent If we iterate the process we get a free dga
$(T(V),\partial)$ and by proposition \ref{2} it is easy to see
that $\big((T(V),\widetilde{\partial}),(\pi_{n})_{n\geq 2}\big)$
is a characteristic pair of $(T(V),\partial)$.
\end{proof}
\begin{remark}
\label{21} By combining the formulas (\ref{a24})  and (\ref{24})
we deduce that, for each $n$, the homomorphism:
$$i\circ \beta_{n+1}:V_{n+1}\cong \ker
d_{n+1} \oplus (\mathrm{Im}\,d_{n+1})'\longrightarrow
H_{n}(T(V_{\leq n}))\overset{i}{\cong}\Gamma_{n}^{T(V)}\oplus \ker
b_{n}$$ satisfies the relation:
\begin{eqnarray}
\beta_{n+1}(z_{n+1,\sigma}+l_{n+1,\sigma'})=b_{n+1}(z_{n+1,\sigma}+\mathrm{Im}\,
d_{n+1})+\varphi(l_{n+1,\sigma'})\oplus
d_{n+1}(l_{n+1,\sigma'})\label{a30}.
\end{eqnarray}
\end{remark}
 In order to pursue our study  we need the following lemma.
\begin{lemma}
\label{lem3} Let $(T(V_{\leq n+1}),\partial)$ and $(T(W_{\leq
n+1}),\delta)$ be two free dga. Assume that there exists a dga
morphism $\alpha:(T(V_{\leq n}),\partial)\rightarrow(T(W_{\leq
 n}),\delta)$ and a homomorphism $\rho_{n+1}$ making the following
diagram commutes:

\begin{picture}(300,90)(-30,30)
\put(60,100){$V_{n+1}\hspace{1mm}\vector(1,0){160}\hspace{1mm}W_{n+1}$}
 \put(49,76){$\beta_{n+1}$} \put(268,76){$\beta'_{n+1}$}
\put(76,96){$\vector(0,-1){38}$} \put(265,96){$\vector(0,-1){38}$}
\put(155,103){$\rho_{n+1}$} \put(165,52){$H_{n}(\alpha)$}
\put(60,48){$H_{n}(T(V_{\leq n}))
\hspace{1mm}\vector(1,0){100}\hspace{1mm}H_{n}(T(W_{\leq
n}))\hspace{1mm}$} \put(-29,76){$(D)$}
\end{picture}

\noindent Then we can extend $\alpha$ to a dga-morphism
$$\theta:(T(V_{n+1}),\partial)\rightarrow(T(W_{\leq n+1}),\delta)$$
\end{lemma}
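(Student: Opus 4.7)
The plan is to extend $\alpha$ to $T(V_{\leq n+1})$ by specifying its values on a basis of the new generators $V_{n+1}$ and then extending multiplicatively; the only substantive point is to use the commutativity of diagram $(D)$ to adjust the naive candidate $\rho_{n+1}(v)$ by a suitable correction term in $T_{n+1}(W_{\leq n})$ so that the differential is respected.

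Fix a basis $(v_{\sigma})_{\sigma \in \Sigma}$ of the free module $V_{n+1}$. For each $v_{\sigma}$, the element $\partial_{n+1}(v_{\sigma})$ is an $n$-cycle in $T(V_{\leq n})$, so $\alpha\bigl(\partial_{n+1}(v_{\sigma})\bigr)$ is an $n$-cycle in $T(W_{\leq n})$; likewise $\delta_{n+1}\bigl(\rho_{n+1}(v_{\sigma})\bigr)$ is an $n$-cycle there. Using the definition $\beta_{n+1}(v)=\partial_{n+1}(v)+\mathrm{Im}\,\partial_{n}$ from (\ref{d1}), the commutativity of diagram $(D)$ translates to the equality
\begin{equation*}
\bigl[\alpha\bigl(\partial_{n+1}(v_{\sigma})\bigr)\bigr]=\bigl[\delta_{n+1}\bigl(\rho_{n+1}(v_{\sigma})\bigr)\bigr]\quad\text{in }H_{n}(T(W_{\leq n})).
\end{equation*}
Consequently there exists $u_{v_{\sigma}}\in T_{n+1}(W_{\leq n})$ with
\begin{equation*}
\alpha\bigl(\partial_{n+1}(v_{\sigma})\bigr)-\delta_{n+1}\bigl(\rho_{n+1}(v_{\sigma})\bigr)=\delta_{n+1}(u_{v_{\sigma}}).
\end{equation*}

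Next, I would define $\theta$ on generators by $\theta|_{V_{\leq n}}=\alpha|_{V_{\leq n}}$ and $\theta(v_{\sigma})=\rho_{n+1}(v_{\sigma})+u_{v_{\sigma}}$ for every $\sigma\in\Sigma$, and extend multiplicatively to the free algebra $T(V_{\leq n+1})$. To check that $\theta$ is a dga-morphism it suffices to verify compatibility with the differentials on the generators. On $V_{\leq n}$ this holds because $\alpha$ is itself a dga-morphism. On a generator $v_{\sigma}\in V_{n+1}$ one computes
\begin{equation*}
\delta\,\theta(v_{\sigma})=\delta_{n+1}\bigl(\rho_{n+1}(v_{\sigma})\bigr)+\delta_{n+1}(u_{v_{\sigma}})=\alpha\bigl(\partial_{n+1}(v_{\sigma})\bigr)=\theta\bigl(\partial_{n+1}(v_{\sigma})\bigr),
\end{equation*}
which is exactly the required identity.

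There is no genuine obstacle here: the statement is a standard generator-by-generator lifting argument for free dgas, and the hypothesis on $\rho_{n+1}$ in diagram $(D)$ is designed precisely to ensure that the obstruction $\alpha\partial_{n+1}(v)-\delta_{n+1}\rho_{n+1}(v)$ is a boundary rather than merely a cycle. The minor caveat is that the correcting elements $u_{v_{\sigma}}$ are not canonical, so the resulting $\theta$ depends on these choices; but since the lemma only asserts the existence of an extension, this is harmless.
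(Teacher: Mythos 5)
Your proposal is correct and follows essentially the same route as the paper: pick a basis of $V_{n+1}$, use the commutativity of $(D)$ to see that $\alpha\partial_{n+1}(v_{\sigma})-\delta_{n+1}\rho_{n+1}(v_{\sigma})$ is a boundary $\delta_{n+1}(u_{v_{\sigma}})$, and correct $\rho_{n+1}$ by these elements before extending multiplicatively. (Your sign convention $\theta(v_{\sigma})=\rho_{n+1}(v_{\sigma})+u_{v_{\sigma}}$ is in fact the consistent one; the paper writes a minus sign there, which is a harmless typo given the choice of $y_{n+1,\sigma}$ is only determined up to sign anyway.)
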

\begin{proof}
First let us choose $(v_{\sigma})_{\sigma\in\Sigma}$ as a basis of
$V_{n+1}$ and recall that we have:
$$H_{n}(\alpha)\beta_{n+1}-\beta'_{n+1}\rho_{n+1}(v_{\sigma})=(\alpha_{n}\partial_{n+1}-\delta_{n+1}\rho_{n+1})(v_{\sigma})+\mathrm{Im}\,\delta_{n+1}$$

\noindent Since the diagram (D) commutes, for each $v_{\sigma}$,
the element
$(\alpha_{n}\circ\partial_{n}-\delta_{n}\circ\rho_{n+1})(v_{\sigma})\in\mathrm{Im}
\,\delta_{n+1}$, therefore there exists an element
$y_{n+1,\sigma}\in T_{n+1}(W_{\leq n})$ such that:
\begin{eqnarray}
\alpha_{n}\circ\partial_{n+1}-\delta_{n+1}\circ\rho_{n+1}(v_{\sigma})=\delta_{n+1}(y_{n+1},\sigma)\label{23}.
\end{eqnarray}
Thus we define $\theta:(T(V_{\leq
n+1}),\partial)\rightarrow(T(W_{\leq n+1}),\delta)$ by setting:
\begin{eqnarray*}
\theta_{n+1}(v_{\sigma})&=&\chi_{n+1}(v_{\sigma})-y_{n+1}\\
\theta_{n}&=&\alpha_{\leq n}
\end{eqnarray*}
Since by  (\ref{23}) we have:
$$\delta_{n+1}\circ\theta_{n+1}(v_{\sigma})=\delta_{n+1}(\xi_{n+1}(v_{\sigma}))-\delta_{n+1}(y_{n+1})=\alpha_{n}\circ\partial_{n+1}(v_{\sigma})=\theta_{n}\circ\partial_{n+1}(v_{\sigma})$$
we deduce that $\theta$ is a dga-morphism.
\end{proof}
\begin{proposition}
\label{1} Every morphism
$\alpha:\big((T(V),\widetilde{\partial}),(\pi_{n})_{n\geq
2}\big)\to \big((T(W),\widetilde{\delta}),(\pi'_{n})_{n\geq
2}\big)$ in $\mathbf {C}$ yields a map
 $\psi:(T(V),\partial)\to (T(W),\delta)$, where  $(T(V),\partial)$ and  $(T(W),\delta)$ are  given by  proposition \ref{49}, satisfying:
 \begin{equation}
 H_{*}(\psi)=H_{*}(\alpha)\label{11}
 \end{equation}
\end{proposition}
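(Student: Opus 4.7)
The plan is to construct $\psi$ inductively along the filtration $(T(V_{\leq n}),\partial^{(n)})$. The key observation is that, by Proposition \ref{49}'s construction, $\partial$ and $\widetilde{\partial}$ agree on $T(V_{\leq n})$ as a sub-algebra: they differ only through the correction $h_n$ that enters when extending across $V_{n+1}$. Consequently the restriction $\alpha^{(2)}$ is already a dga-morphism for both structures, and one may take $\psi^{(2)} := \alpha^{(2)}$ as the base case.

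For the inductive step, assume $\psi^{(n)}:(T(V_{\leq n}),\partial^{(n)})\to(T(W_{\leq n}),\delta^{(n)})$ has been built as a dga-morphism with $H_{*}(\psi^{(n)}) = H_{*}(\alpha^{(n)})$. I would extend via Lemma \ref{lem3}, whose hypothesis requires a homomorphism $\rho_{n+1}\colon V_{n+1}\to W_{n+1}$ making the square $(D)$ commute for the $\beta$'s attached to $\partial$ and $\delta$. The candidate to test is $\rho_{n+1} := \widetilde{\alpha}_{n+1}$, the indecomposable component of $\alpha_{n+1}$. Expanding $\beta_{n+1}$ and $\beta'_{n+1}$ via the decomposition $V_{n+1}\cong \ker d_{n+1}\oplus(\mathrm{Im}\,d_{n+1})'$ and formula (\ref{a30}) of Remark \ref{21}, the two composites $H_{n}(\psi^{(n)})\circ \beta_{n+1}$ and $\beta'_{n+1}\circ \widetilde{\alpha}_{n+1}$ agree on the $\ker d_{n+1}$ summand by functoriality of $b_{n+1}$, while their discrepancy on the $(\mathrm{Im}\,d_{n+1})'$ summand equals $\gamma_{n}^{\alpha}\circ\varphi_{n} - \varphi'_{n}\circ\xi_{n+1}$.

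This is precisely the quantity controlled by the morphism condition (\ref{47}) of $\mathbf{C}$: Remark \ref{A2} translates it into (\ref{18}), asserting that the discrepancy lies in $\mathrm{Im}\,b'_{n+1} + \mathrm{Im}(\widetilde{h}_{n}\circ d_{n+1})$. The $\widetilde{h}_{n}$-piece is exactly the correction hard-wired into $\delta_{n+1}$ relative to $\widetilde{\delta}_{n+1}$ by Proposition \ref{49}, so it is absorbed automatically; the residual image lying in $\mathrm{Im}\,b'_{n+1}$ can be killed by adjusting $\rho_{n+1}$ with a lift through $b'_{n+1}$. With a suitably corrected $\rho_{n+1}$, Lemma \ref{lem3} produces $\psi^{(n+1)}$, and iterating yields $\psi$.

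The main obstacle is the calculational bookkeeping in this middle step: one must carefully disentangle two distinct pieces of extension data---the $h_{n}$ that relates $\partial$ to $\widetilde{\partial}$ in each of the two dgas, and the $\widetilde{h}_{n}$ arising from $\alpha$ being a $\mathbf{C}$-morphism---through the splitting $H_{n}(T(V_{\leq n}))\cong \Gamma_{n}^{T(V)}\oplus\ker\beta_{n}$ of (\ref{4}). The homology statement $H_{*}(\psi) = H_{*}(\alpha)$ is then automatic, because every adjustment made to $\widetilde{\alpha}_{n+1}$ during the extension lies in $T_{n+1}(W_{\leq n})$, hence is decomposable and vanishes on the indecomposables; the induced maps on $(V,d)$ therefore coincide, and Moore's theorem (cf.\ the argument closing Theorem \ref{X9}) upgrades this to the claimed equality on $H_{*}$.
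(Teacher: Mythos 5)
Your overall skeleton (induction along the skeleta, extending via Lemma \ref{lem3}, and invoking the $\mathbf{C}$-morphism condition through Remark \ref{A2} to control the defect $H_{n}(\alpha)\circ\beta_{n+1}-\beta'_{n+1}\circ\xi_{n+1}$) is the same as the paper's, but the middle step hides a genuine gap. You claim that the $\widetilde{h}_{n}\circ d_{n+1}$ part of the discrepancy is ``absorbed automatically'' because it is ``hard-wired into $\delta_{n+1}$'' by Proposition \ref{49}. That is not so: the correction built into $\delta_{n+1}$ is a representative of $\pi'_{n}$ (and the one built into $\partial_{n+1}$ a representative of $\pi_{n}$), whereas $\widetilde{h}_{n}$ is the chain homotopy witnessing that $(\widetilde{\gamma}^{\alpha}_{n})_{*}\pi_{n}$ and $(H_{n}(\alpha))^{*}\pi'_{n}$ agree only as extension \emph{classes}; it is a new piece of data contained in neither differential. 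A term of the form $\omega_{n}\circ d_{n+1}$ with values in $\Gamma^{T(W)}_{n}$ cannot in general be cancelled by modifying $\rho_{n+1}$ alone, since $\beta'_{n+1}$ only reaches $\mathrm{Im}\,b'_{n+1}+\mathrm{Im}\,\varphi'_{n}$ inside $\Gamma^{T(W)}_{n}$, and perturbing $\rho_{n+1}$ into $(\mathrm{Im}\,d'_{n+1})'$ would destroy the $j'_{n}$-compatibility. The paper resolves this by going back and modifying the already-constructed map in degree $n$: it lifts $\omega_{n}$ to $\eta_{n}:V_{n}\to Z_{n}(T(W_{\leq n-1}))$ and replaces $\alpha_{n}$ by $\psi_{n}=\alpha_{n}-\eta_{n}$, so that $H_{n}(\psi)=H_{n}(\alpha)-\omega_{n}\circ j_{n}$ and the composite $\omega_{n}\circ j_{n}\circ\beta_{n+1}=\omega_{n}\circ d_{n+1}$ exactly cancels that defect. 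Your induction hypothesis freezes $\psi^{(n)}$, so this correction is unavailable in your scheme; you need to weaken the invariant to (as the paper does) agreement of $\widetilde{\psi}_{k}$ with $\xi_{k}$ on $\ker d_{k}$ only, and allow the top-degree map to be adjusted by decomposable-valued terms at each stage.

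A second, smaller flaw: your justification of $H_{*}(\psi)=H_{*}(\alpha)$ asserts that every adjustment to $\widetilde{\alpha}_{n+1}$ is decomposable. The adjustment $\lambda_{n+1}$ that kills the residual $\mathrm{Im}\,b'_{n+1}$ part (diagram (F) in the paper) takes values in $\ker d'_{n+1}\subseteq W_{n+1}$, which is linear, not decomposable, so it \emph{does} change the induced map on indecomposables. The correct argument is the paper's Remark preceding the definition of $\psi$: since the defect vanishes on $\ker d_{n+1}$, $\lambda_{n+1}$ may be chosen to vanish there too, so the restriction of $\widetilde{\theta}_{n+1}$ to $\ker d_{n+1}$ still equals $\xi_{n+1}$ and the induced map on $H_{n+1}(V,d)$ is unchanged. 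With those two repairs your argument matches the paper's; as written, it does not go through.
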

\begin{proof}
Assume, by the way of the induction, that we have constructed a
dga-morphism $\alpha:(T(V_{\leq n}),\partial )\rightarrow
(T(W_{\leq n}),\delta)$ such that
 $\widetilde{\alpha}_{ k}'=\xi'_{k} ,\,\,\,\,\forall  k\leq
n$,  where $\widetilde{\alpha}_{ k}'$ is  the restriction  of
$\widetilde{\alpha}_{ k}$ to $\ker d_{k}$. Now  consider  the
following diagram which not need to be commutative:

\begin{picture}(300,130)(10,-5)
\put(60,100){$V_{n+1}\hspace{1mm}\vector(1,0){160}\hspace{1mm}W_{n+1}$}
 \put(49,76){$\beta_{n+1}$} \put(268,76){$\beta'_{n+1}$}
\put(76,96){$\vector(0,-1){38}$} \put(265,96){$\vector(0,-1){38}$}
\put(155,103){$\xi_{n+1}$} \put(165,52){$H_{n}(\alpha)$}
\put(60,48){$H_{n}(T(V_{\leq n}))
\hspace{1mm}\vector(1,0){100}\hspace{1mm}H_{n}(T(W_{\leq
n}))\vector(1,0){60}\hspace{1mm}W_{n}$}\put(325,52){$j'_{n}$}
 \put(60,30){$j_{n}$}\put(76,46){$\vector(0,-1){27}$}\put(70,8){$V_{n}$}
\end{picture}

\noindent Since $j'_{n}(H_{n}(\alpha) \beta_{n+1}-\beta'_{n+1}
\xi_{n+1})=0$, so $\mathrm{Im}\,(H_{n}(\alpha)
\beta_{n+1}-\beta'_{n+1}
\xi_{n+1})\subseteq \Gamma_{n}^{T(W_{\leq n})}$.

\noindent Recall that $d_{n+1}=j_{n}\circ \beta_{n+1}.$ \\

\noindent In order to continue the proof  we need  the following
technical  lemma:
\begin{lemma}
\label{lem1}There  exists a homomorphism $\omega_{n}:V_{n}\to
\Gamma_{n}^{T(W_{\leq n})}$ such that:
$$\mathrm{Im}\,\big(H_{n}(\alpha)\circ \beta
_{n+1}-\beta' _{n+1}\circ  \xi_{n+1}-\omega_{n}\circ
d_{n+1}\big)\subseteq \mathrm{Im}\,b'_{n+1}$$
\end{lemma}
\begin{proof}
Consider the following  diagram:

\begin{picture}(300,190)(0,-15)
\put(80,48){$(\mathrm{Im}\,d'_{n+1})'\overset{\varphi'_{n}}{\longrightarrow
}\Gamma^{T(W)}_{n}\overset{pr'}{\twoheadrightarrow}
\mathrm{Coker}\,b'_{n+1}$}
\put(80,102){$(\mathrm{Im}\,d_{n+1})'\overset{\varphi_{n}}{\longrightarrow
}\Gamma^{T(V)}_{n}\overset{pr}{\twoheadrightarrow}
\mathrm{Coker}\,b_{n+1}$}
 \put(95,98){$\vector(0,-1){39}$}
 \put(97,78){$\xi_{n+1}$}
 \put(160,98){$\vector(0,-1){39}$}
 \put(162,78){$\gamma^{\alpha}_{n}$}
\put(220,78){$\widetilde{\gamma}_{n}^{\alpha}$}
\put(218,98){$\vector(0,-1){39}$} \put(85,150){$\ker d_{n}$}
\put(85,0){$\ker d'_{n}$} \put(95,110){$\vector(0,1){37}$}
\put(95,46){$\vector(0,-1){37}$}
\put(118,150){$\vector(3,-1){100}$}
\put(115,8){$\vector(3,1){100}$} \put(162,138){$g_{n}$}
\put(162,16){$g'_{n}$} \put(73,127){\scriptsize $d_{n+1}$}
\put(73,30){\scriptsize $d'_{n+1}$}\put(7,78){$(E)$}
\end{picture}

\noindent where the top and the  bottom triangles commute by
virtues of relation (\ref {41}) in remark \ref {40}, where
$\varphi_{n}$ and $\varphi'_{n}$ are respectively the restriction
of the homomorphisms $\beta_{n+1}$ and $\beta'_{n+1}$  to
$(\mathrm{Im}\, d_{n+1})'$ and $(\mathrm{Im}\, d'_{n+1})'$. Recall
that the homomorphism
 $\widetilde{\gamma}_{n}^{\alpha}$ is defined by the formula
$\widetilde{\gamma}_{n}^{\alpha}(x+\mathrm{Im}\,
b_{n+1})=\gamma_{n}^{\alpha}(x)+\mathrm{Im}\, b'_{n+1}$.

\noindent First let us write $V_{n+1}\cong \ker d_{n+1}\oplus
(\mathrm{Im}\,d_{n+1})'$. As it has done in the proof of
proposition 3 and by using the formula (\ref{a30}) an easy
computation shows that on $\ker d_{n+1}$ the homomorphism $
H_{n}(\alpha)\circ \beta _{n+1}-\beta' _{n+1}\circ \xi_{n+1}$ is
nil, while on  on  $(\mathrm{Im}\,d_{n+1})'$ we get:
\begin{eqnarray}
H_{n}(\alpha)\circ \beta _{n+1}-\beta' _{n+1}\circ
\xi_{n+1}=\gamma _{n}^{\alpha}\circ \varphi_{n}-\varphi'_{n}\circ
\xi _{n+1}.\label{17}
\end{eqnarray}
But the diagram $(E)$ implies that:
\begin{eqnarray*}
 pr'\circ
(\gamma _{n}^{\alpha}\circ \varphi_{n}-\varphi'_{n}\circ \xi
_{n+1})&=&\widetilde{\gamma}_{n}^{\alpha}\circ pr\circ
\varphi_{n}-pr'\circ \varphi'_{n}\circ \xi _{n+1}\\
&=&\widetilde{\gamma}_{n}^{\alpha}\circ g_{n}\circ
d_{n+1}-g'_{n}\circ d'_{n+1}\circ \xi _{n+1}\\
&=&\widetilde{\gamma}_{n}^{\alpha}\circ g_{n}\circ
d_{n+1}-g'_{n}\circ \xi_{n}\circ d _{n+1}\\
&=&(\widetilde{\gamma}_{n}^{\alpha}\circ g_{n}\circ -g'_{n}\circ
\xi_{n})\circ d _{n+1}
\end{eqnarray*}
Therefore if we set
$\widetilde{\omega}_{n}=(\widetilde{\gamma}_{n}^{\alpha}\circ
g_{n}\circ -g'_{n}\circ \xi_{n}):\ker d_{n}\to
\mathrm{Coker}\,b'_{n+1}$, then the formula (\ref{17}) becomes:
\begin{eqnarray}
 pr'\circ
(H_{n}(\alpha)\circ \beta _{n+1}-\beta' _{n+1}\circ
\xi_{n+1})=\widetilde{\omega}_{n}\circ d _{n+1}\label{24}
\end{eqnarray}

\noindent Now since  $\ker d_{n}$ is a direct factor of $V_{n}$
which is free, there  exist two  homomorphism $\omega_{n}$ and
$\eta_{n}$ making the following diagram commutes:

\begin{picture}(300,80)(-30,-29)
\put(108,26){$\vector(3,-1){100}$}
\put(99,27){$\vector(0,-1){30}$}
\put(210,-12){$\vector(-1,0){60}$}
\put(215,-14){$\Gamma^{T(W_{\leq n})}_{n}$}
\put(70,-14){$0\leftarrow \mathrm{Coker}\,b'_{n+1}$}
\put(82,10){$\widetilde{\omega}_{n}$}
\put(40,30){$V_{n+1}\overset{d_{n+1}}{\longrightarrow}V_{n}\,\,\vector(1,0){80}\,\,Z_{n}(T(W_{\leq
n-1}))$} \put(161,10){$\omega_{n}$}
\put(170,-23){$pr'$}\put(150,33){$\eta_{n}$}\put(216,27){$\vector(0,-1){30}$}
\put(216,03){$\vector(0,-1){04}$}
\end{picture}

\noindent where  $%
Z_{n}(T(W_{\leq n-1}))$ is the sub-module of the $n$-cycles of
$T_{n}(W_{\leq n-1})$ and where $%
Z_{n}(T(W_{\leq n-1}))\to  \Gamma^{T(W_{\leq n})}_{n}$ is the
projection given by the relation (\ref{31}). Note that we have:
\begin{eqnarray}
 \omega_{n}(v_{n})=
\eta_{n}(v_{n}+\mathrm{Im}\,\partial)\label{a40}.
\end{eqnarray}
 So the formula (\ref{24}) becomes:
\begin{eqnarray}
 pr'\circ
(H_{n}(\alpha)\circ \beta _{n+1}-\beta' _{n+1}\circ
\xi_{n+1})=pr'\circ \omega_{n}\circ d _{n+1}\label{27}
\end{eqnarray}
The last formula means that  $pr'\circ (H_{n}(\alpha)\circ \beta
_{n+1}-\beta' _{n+1}\circ \xi_{n+1}- \omega_{n}\circ d _{n+1})$ is
nil in $\mathrm{Coker}\,b'_{n+1}$ so
$\mathrm{Im}\,\big(H_{n}(\alpha)\circ \beta _{n+1}-\beta'
_{n+1}\circ  \xi_{n+1}-\omega_{n}\circ d_{n+1}\big)\subseteq
\mathrm{Im}\,b'_{n+1}$ and the lemma is proved.
\end{proof}

 Now we are able to continue the proof of proposition 6. From lemma \ref{lem1}
we deduce that there exists a homomorphism $\lambda _{n+1}$ making
the following diagram commutes:

\begin{picture}(300,100)(-40,05)
\put(258,75){$\vector(0,-1){37}$} \put(246,78){$\ker d'_{n+1}$}
\put(77,34){$\vector(4,1){165}$}
\put(45,30){$V_{n+1}\hspace{1mm}\vector(1,0){169}\hspace{2mm}\mathrm{Im}\,b'_{n+1}$}
 \put(92,20){\scriptsize $H_{n}(\alpha) \beta
_{n+1}-\beta' _{n+1}  \xi_{n+1}-\omega_{n} d_{n+1}$}
 \put(143,60){$\lambda _{n+1}$}
\put(260,55){$\beta' _{n+1}$} \put(258,44){$\vector(0,-1){4}$}
\put(-20,55){$(F)$}
\end{picture}
\begin{remark}
\label{30} We have seen that the homomorphism $H_{n}(\alpha) \beta
_{n+1}-\beta' _{n+1}  \xi_{n+1}$ is nil on $\ker d_{n+1}$ so
$\lambda _{n+1}$ can be also chosen nil on $\ker d_{n+1}$.
\end{remark}

\noindent Define $\psi:T(V_{\leq
n}),\partial)\rightarrow(T(W_{\leq n}),\delta)$ by setting:
\begin{eqnarray*}
\psi_{n}=\alpha_{n}-h_{n}\hspace{2cm} \psi_{\leq n-1}=\alpha_{\leq
n-1}
\end{eqnarray*}
\begin{remark}
\label{35} Since $\mathrm{Im}\,h_{n}\subset Z_{n}(T(W_{\leq
n-1}))$, we deduce the following  properties:

\noindent first it is easy to see that $\psi$ is a dga-morphism,
next the  homomorphism $\widetilde{\psi}_{n}:V_{n}\rightarrow
W_{n}$ induced by $\psi$ satisfies the relation:
\begin{eqnarray}
\widetilde{\psi}_{k}'=\widetilde{\alpha}_{k}'=\xi'_{k}\,\,\,,\hspace{1cm}\forall
k\leq n\label{33}
\end{eqnarray}
where $\widetilde{\psi}_{ k}'$ is the restriction of
$\widetilde{\psi}_{ k}$ to  $\ker d_{k}$ and  finally we have:
\begin{eqnarray}
H_{n}(\psi)= H_{n}(\alpha)-\omega_{n}\circ j_{n}\label{34}.
\end{eqnarray}
The last formula is due to the following: let
$(v_{n}+q_{n}+\mathrm{Im}\,\partial)$ be a class in
$H_{n}(T(V_{\leq n}))$,  where $v_{n}\in V_{n}$ and where
$q_{n}\in T(V_{\leq n-1})$. Recall that the homomorphism
$j_{n}:H_{n}(T(V_{\leq n}))\to V_{n}$ is defined by
$j_{n}(v_{n}+q_{n}+\mathrm{Im}\,\partial)=v_{n}$.

\noindent According to the relation (\ref{40}) and the definition
of  $j_{n}$ and an easy computation shows that:
\begin{eqnarray*}
 (H_{n}(\alpha)-\omega_{n}\circ j_{n})(v_{n}+q_{n}+\mathrm{Im}\,\partial)&=&H_{n}(\alpha)(v_{n}+q_{n}+\mathrm{Im}\,\partial)-\\
 &&\omega_{n}\circ j_{n}(v_{n}+q_{n}+\mathrm{Im}\,\partial)\\
&=&(\alpha_{n}(v_{n}+q_{n})+\mathrm{Im}\,\delta)-\omega_{n}(v_{n})\\
&=&(\alpha_{n}(v_{n}+q_{n})+\mathrm{Im}\,\delta)-h_{n}(v_{n}+\mathrm{Im}\,\partial)\\
&=&H_{n}(\psi)(v_{n}+q_{n}+\mathrm{Im}\,\partial).
\end{eqnarray*}
\end{remark}

\noindent  Now we assert that
 the following diagram commutes:

\begin{picture}(300,90)(-30,30)
\put(60,100){$V_{n+1}\hspace{1mm}\vector(1,0){160}\hspace{1mm}W_{n+1}$}
 \put(49,76){$\beta_{n+1}$} \put(268,76){$\beta'_{n+1}$}
\put(76,96){$\vector(0,-1){38}$} \put(265,96){$\vector(0,-1){38}$}
\put(155,103){$\xi_{n+1}+\lambda_{n+1}$}
\put(165,52){$H_{n}(\psi)$} \put(60,48){$H_{n}(T(V_{\leq n}))
\hspace{1mm}\vector(1,0){100}\hspace{1mm}H_{3n}(T(W_{\leq
n}))\hspace{1mm}$}
\end{picture}

\noindent Indeed, by the diagram (F) and the relations (2.5) and
(\ref{34}) we can write:
\begin{eqnarray*}
H_{n}(\psi)\beta_{n+1}-\beta'_{n+1}( \xi
_{n+1}+\lambda_{n+1})\hspace{-3mm}&=&\hspace{-3mm}(H_{n}(\alpha)-\omega_{n}j_{n})
\beta_{n+1}-\beta'_{n+1} \xi _{n+1}-\beta'_{n+1} \lambda
_{n+1}\\
\hspace{-3mm}&=&\hspace{-3mm}H_{n}(\alpha)\beta_{n+1}-\omega_{n}j_{n}\beta_{n+1}-\beta'_{n+1} \xi_{n+1}\\
&&-\beta'_{n+1} \lambda_{n+1}\\
&=&\hspace{-3mm}H_{n}(\alpha)\beta_{n+1}-\omega_{n}d_{n+1}-\beta'_{n+1} \xi_{n+1}-\beta'_{n+1} \lambda_{n+1}\\
&=&0
\end{eqnarray*}
Therefore by lemma \ref{lem3} we can extend  $\psi$ to a dga-
morphism $\theta:(T(V_{n+1}),\partial)\rightarrow(T(W_{\leq
n+1}),\delta)$.

\noindent Finally by remark \ref{30} we know that $\lambda_{n+1}$
is nil on $\ker d_{n+1}$, it follows that
$\widetilde{\theta}'_{n+1}=\xi'_{n+1}$, where
$\widetilde{\theta}_{ n+1}'$ is the restriction of
$\widetilde{\theta}_{ n+1}$ to  $\ker d_{n+1}$, hence  by   the
relation $(\ref{33})$ we deduce that the formula $(\ref{X13})$ is
satisfied
\end{proof}

\begin{remark}
\label{21} For every $
 n$ we have
$H_{n}(\widetilde{\theta})= H_{n}(\xi_{*})$. Indeed; it is
well-known that:
\begin{eqnarray}
H_{n}(\widetilde{\theta})(x_{n}+\mathrm{Im}\,d_{n+1})=\widetilde{\theta}_{n}(n_{k})+\mathrm{Im}\,d'_{n+1}\label{36}
\end{eqnarray}
where $x_{n}\in \ker d_{n}$. But by the relation $(\ref{33})$ we
know that on  $\ker d_{n}$ we have
$\widetilde{\theta}_{n}=\xi_{n}$ so the formula $(\ref{36})$
becomes:
\begin{eqnarray}
H_{n}(\widetilde{\theta})(x_{n}+\mathrm{Im}\,d_{n+1})=\xi_{n}(x_{k})+\mathrm{Im}\,d'_{n+1}=H_{k}(\xi_{*})(x_{n}+\mathrm{Im}\,d_{n+1})
\end{eqnarray}
Therefore  $H_{n}(\widetilde{\theta})= H_{n}(\xi_{*})$.
\end{remark}

Now we are able to announce the following result which gives an
answer to the question asked in the beginning of this section:
\begin{theorem}
\label{7} Let $(T(V),\partial)$ and  $(T(W),\delta)$ two free
dgas.
 Assume that there exists a quasi-isomorphism
$\alpha:(T(V),\widetilde{\partial})\to (T(W),\widetilde{\delta})$
 between their associated perfect  dgas such that $\alpha:\big((T(V),\widetilde{\partial}),(\pi_{n})_{n\geq 2}\big)\to
\big((T(W),\widetilde{\delta}),(\pi'_{n})_{n\geq 2}\big)$ is a
morphism in the category $\mathbf {C}$. Then   $(T(V),\partial)$
and $(T(W),\delta)$ are quasi-isomorphic.
\end{theorem}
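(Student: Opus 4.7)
The plan is to deduce the theorem essentially as a corollary of Proposition \ref{1}, with the Whitehead exact sequence and a five-lemma argument doing the final work. Given the morphism $\alpha$ in the category $\mathbf{C}$, Proposition \ref{1} produces a dga-morphism $\psi:(T(V),\partial)\to (T(W),\delta)$ whose chain map $\widetilde{\psi}$ on indecomposables induces, in homology, the same map as $\widetilde{\alpha}$ (cf. remark \ref{21} in the proof of Proposition \ref{1}). What remains is to verify that $\psi$ is a quasi-isomorphism, after which Moore's theorem gives the conclusion.

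To do this, I would compare the Whitehead exact sequences of $(T(V),\partial)$ and $(T(W),\delta)$ via $\psi$. By remark \ref{42}, the Whitehead sequence of $(T(V),\partial)$ has the same connecting homomorphism $b_*$ and the same groups $\Gamma^{T(V)}_*$ as that of $(T(V),\widetilde{\partial})$ (and similarly for $W$). Thus $\psi$ induces a morphism between the two Whitehead sequences whose vertical arrow on $H_n(V,d)$ is $H_n(\widetilde{\psi})=H_n(\widetilde{\alpha})$ and whose vertical arrow on $\Gamma^{T(V)}_n$ is the restriction $\gamma^{\psi}_n=\gamma^{\alpha}_n$. Since $\alpha$ is a quasi-isomorphism of perfect dgas, Theorem \ref{X9} asserts that its Whitehead exact sequences are isomorphic, so in particular $H_*(\widetilde{\alpha})$ and $\gamma^{\alpha}_*$ are isomorphisms. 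The five-lemma then yields that $H_n(\psi)$ is an isomorphism for every $n\geq 2$, and Moore's theorem \cite{M1} shows that $\psi$ is a quasi-isomorphism.

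The main obstacle I anticipate is the careful verification that the restriction of $H_n(\psi)$ to $\Gamma^{T(V)}_n$ really coincides with $\gamma^{\alpha}_n$. The dga $(T(V),\partial)$ and its associated perfect dga $(T(V),\widetilde{\partial})$ differ precisely by the perturbation $h_n$ introduced in the proof of Proposition \ref{49}, and the map $\psi$ is obtained from $\alpha$ by compensating adjustments $\omega_n$ and $\lambda_{n+1}$ as in the proof of Proposition \ref{1}. One must check that, after all these adjustments, the restriction of $H_n(\psi)$ to the summand $\Gamma^{T(V)}_n\subset H_n(T(V_{\leq n}))$ is unaffected, which follows from the fact (remarks \ref{35} and \ref{30}) that the perturbations land in $\mathrm{Im}\, b'_{n+1}$ and vanish on $\ker d_{n+1}$; this is exactly the content of the hypothesis (\ref{47}) expressing that $\alpha$ is a morphism in $\mathbf{C}$.
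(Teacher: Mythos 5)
Your proposal matches the paper's proof, which is exactly this: invoke Proposition \ref{1} to produce $\psi:(T(V),\partial)\to(T(W),\delta)$ with the prescribed behaviour on indecomposables, then conclude that $\psi$ is a quasi-isomorphism via Whitehead's/Moore's theorem. Your five-lemma pass through the Whitehead exact sequences (with the check that $\gamma^{\psi}_n=\gamma^{\alpha}_n$, which indeed follows from $H_n(\psi)=H_n(\alpha)-\omega_n\circ j_n$ and $\Gamma^{T(V)}_n=\ker j_n$) is a slightly longer route to the same conclusion the paper reaches directly from $H_*(\widetilde{\psi})=H_*(\widetilde{\alpha})$ being an isomorphism, but the substance is identical.
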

\begin{proof}
It is  follows immediately from  proposition 6 and Whitehead's
theorem.
\end{proof}
Recall that the notion of quasi-perfect  dgas is defined in
definition \ref{dd3}. As a consequence of theorem \ref{7} we
derive the following useful result:
\begin{corollary}
\label{t2} If  $(T(V),\partial)$ is  quasi-perfect, then
$(T(V),\partial)$ and  $(T(V),\widetilde{\partial})$ are
quasi-isomorphic.
\end{corollary}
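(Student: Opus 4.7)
The plan is to apply Theorem \ref{7} with the identity morphism on the associated perfect dga. For this I need to exhibit a morphism in the category $\mathbf{C}$ between the characteristic pair of $(T(V),\partial)$ and the characteristic pair of $(T(V),\widetilde{\partial})$, and the quasi-perfect hypothesis will make this essentially automatic.

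First I would identify the two characteristic pairs explicitly. By Proposition \ref{2}, the characteristic pair of $(T(V),\partial)$ is $\big((T(V),\widetilde{\partial}),(\pi_{n})_{n\geq 2}\big)$, where each $\pi_{n}\in \mathrm{Ext}(H_{n}(V,d),\mathrm{Coker}\,b_{n+1})$ is, by construction, precisely the extension $[\overline{\varphi_{n}}]$ attached to $(T(V),\partial)$. Since $(T(V),\partial)$ is quasi-perfect, Definition \ref{d16} together with Definition \ref{dd3} give $\pi_{n}=0$ for all $n\geq 2$. On the other hand, by Remark \ref{42} the dga $(T(V),\widetilde{\partial})$ is perfect and its Whitehead exact sequence shares the same homomorphisms $b_{n+1}$. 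Applying Proposition \ref{2} to $(T(V),\widetilde{\partial})$, its characteristic pair has the form $\big((T(V),\widetilde{\partial}),(\pi'_{n})_{n\geq 2}\big)$ (the associated perfect dga being itself, up to quasi-isomorphism), and since a perfect dga is in particular quasi-perfect, each $\pi'_{n}$ is also trivial.

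Next I would take $\alpha=\mathrm{id}:(T(V),\widetilde{\partial})\to (T(V),\widetilde{\partial})$; this is trivially a quasi-isomorphism. To check that $\alpha$ is a morphism in $\mathbf{C}$, I must verify the relation
\begin{equation*}
(H_{n}(\alpha))^{\ast}(\pi'_{n})=(\widetilde{\gamma}_{n}^{\alpha})_{\ast}(\pi_{n}),\qquad \forall n\geq 2.
\end{equation*}
Because $\pi_{n}=\pi'_{n}=0$ in the respective Ext groups, both sides of this equation vanish, so the condition is trivially satisfied. Hence $\alpha$ is indeed a morphism in $\mathbf{C}$ between the two characteristic pairs.

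Finally I would invoke Theorem \ref{7}: the existence of such a morphism, whose underlying dga map between the associated perfect dgas is a quasi-isomorphism, implies that the two free dgas having these characteristic pairs, namely $(T(V),\partial)$ and $(T(V),\widetilde{\partial})$, are quasi-isomorphic. I do not foresee a genuine obstacle here, since the quasi-perfect hypothesis has been precisely designed so that the extension-matching condition defining morphisms in $\mathbf{C}$ collapses to $0=0$; the only thing to be mildly careful about is that the $b_{n+1}$'s agree on both sides, but this is ensured by Remark \ref{42}.
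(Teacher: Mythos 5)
Your proposal is correct and follows essentially the same route as the paper: identify both characteristic pairs, observe that quasi-perfectness forces $\pi_{n}=0$ (and that the perfect dga $(T(V),\widetilde{\partial})$ has trivial extensions as well), and then apply Theorem~\ref{7} to the identity morphism, for which the compatibility condition in $\mathbf{C}$ reduces to $0=0$. No gaps.
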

\begin{proof}
On one hand since  $(T(V),\partial)$ is  quasi-perfect, we deduce
that the characteristic pair
$\big((T(V),\widetilde{\partial}),(\pi_{n})_{n\geq 2}\big)$
associated with $(T(V),\partial)$ is such that $\pi_{n}=0$ for all
$n\geq 2$. On the other hand  the pair
$\big((T(V),\widetilde{\partial}),(0)\big)$ can be considered as
 a characteristic pair associated with the perfect dga $(T(V),\widetilde{\partial})$. Therefore the morphism
$Id_{T(V)}:\big((T(V),\widetilde{\partial}),(\pi_{n})_{n\geq
2}\big)\to \big((T(V),\widetilde{\partial}),(0)\big)$, where
$Id_{T(V)}$ is the identity map which is obviously a
quasi-isomorphism, is a morphism in the category $\mathbf{C}$,
therefore
 by theorem \ref{7} we conclude that
 $(T(V),\partial)$ and  $(T(V),\widetilde{\partial})$ are
quasi-isomorphic.
\end{proof}
We have seen that  free dgas  $(T(V),\partial)$ such that
$H_{*}(V,d)$ is free are a kind of  quasi-perfect dgas and by
corollary \ref{t2} we know that quasi-perfect  dgas have the
quasi-isomorphism type of perfect dgas, therefore from theorem 1
we derive:
\begin{corollary}
\label{t2} If the two free dgas  $(T(V),\partial)$ and
$(T(W),\delta)$ are such that $H_{*}(V,d)$ and $H_{*}(W,d)$ are
free graded modules, then $(T(V),\partial)$ and $(T(W),\delta)$
are  quasi-isomorphic if and only if their Whitehead exact
sequences are isomorphic.
\end{corollary}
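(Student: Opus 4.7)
The proof splits into two directions; the forward direction ($\Rightarrow$) is standard, since the Whitehead exact sequence is an invariant of quasi-isomorphism type (as noted at the end of the proof of \thmref{X9}). The strategy for the converse is to reduce to the perfect case, where \thmref{X9} already does all of the work.

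For the converse, the crucial input is Example~\ref{35}: when the graded module $H_{*}(V,d)$ is free, all groups $\mathrm{Ext}^{1}_{R}(H_{n}(V,d), \mathrm{Coker}\,b_{n+1})$ vanish, so $(T(V),\partial)$ is itself perfect; by the same reasoning, so is $(T(W),\delta)$. The hypothesis that their Whitehead exact sequences are isomorphic then places us directly in the situation of \thmref{X9}, which produces the desired quasi-isomorphism.

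If one prefers the path hinted at by the surrounding discussion, an equivalent argument goes through the associated perfect dgas. Freeness of $H_{*}(V,d)$ and $H_{*}(W,d)$ implies quasi-perfectness via Example~\ref{35}, so Corollary~\ref{t2} yields quasi-isomorphisms $(T(V),\partial) \simeq (T(V),\widetilde{\partial})$ and $(T(W),\delta) \simeq (T(W),\widetilde{\delta})$. Because the Whitehead sequence is a quasi-isomorphism invariant, the hypothesis transports to an isomorphism between the Whitehead sequences of $(T(V),\widetilde{\partial})$ and $(T(W),\widetilde{\delta})$, whereupon \thmref{X9} provides a quasi-isomorphism between these two perfect dgas, and composition of the three quasi-isomorphisms delivers the conclusion.

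No real obstacle is expected: the corollary is a direct synthesis of Example~\ref{35} (or Corollary~\ref{t2}) with \thmref{X9}, and all of the heavy lifting has already been carried out in establishing those results. The only conceptual care needed is to notice that one does not have to argue via the associated perfect dgas at all — freeness of the homology upgrades quasi-perfectness to outright perfectness, so \thmref{X9} applies to the original dgas themselves.
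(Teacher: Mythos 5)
Your second paragraph --- quasi-perfectness via Example~\ref{35}, then the corollary asserting that a quasi-perfect dga is quasi-isomorphic to its associated perfect dga, then Theorem~\ref{X9} applied to $(T(V),\widetilde{\partial})$ and $(T(W),\widetilde{\delta})$, then composing the three quasi-isomorphisms --- is precisely the paper's argument, and it is the part of your proposal that works.

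The shortcut you advocate in your first and final paragraphs, however, has a genuine gap. Vanishing of $\mathrm{Ext}^{1}_{R}(H_{n}(V,d),\mathrm{Coker}\,b_{n+1})$ forces the extension class $[\overline{\varphi_{n}}]$ to be trivial, which is exactly the definition of quasi $n$-perfect (Definition~\ref{dd3}); it does not force the homomorphism $\varphi_{n}:(\mathrm{Im}\,d_{n+1})'\to\Gamma^{T(V)}_{n}$ itself to vanish, which is what $n$-perfect means (Definition~\ref{d3}). These are genuinely different conditions: the first dga of Example~\ref{39} is quasi $2$-perfect but not $2$-perfect, and freeness of $H_{*}(V,d)$ does not prevent $(\mathrm{Im}\,d_{n+1})'$ from being nonzero and $\varphi_{n}$ from being a nonzero map whose induced extension class merely happens to die. (Example~\ref{35} does use the word ``perfect'', but the justification given there only delivers quasi-perfectness, and the paper itself downgrades the claim to ``a kind of quasi-perfect dgas'' in the very sentence that introduces this corollary.) Since Theorem~\ref{X9} and the machinery behind it (the splitting (\ref{a13}), Proposition~\ref{p2}) are established only for perfect dgas, you cannot apply Theorem~\ref{X9} directly to $(T(V),\partial)$ and $(T(W),\delta)$. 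The detour through the associated perfect dgas is therefore not optional --- it is the proof. Keep your second paragraph and delete the claim that freeness of the homology upgrades quasi-perfectness to outright perfectness.
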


Corollary \ref{t2} can be interpreted geometrically as follows.
Recall that the Adams-Hilton model \cite{ada} of a simply
connected CW-complex is a quasi-isomorphism of algebras
$(T(s^{-1}\overline{CellX}),\partial)\overset{\simeq}{\longrightarrow}
C_{*}(\Omega X)$, where $C_{*}(\Omega X)$ is the singular chain
complex  of the loop space  of $X$ and where
$T(s^{-1}\overline{CellX})$ is the free dga of the free $
R$-module generated by the desuspension of the set of the cells of
$X$. As a consequence of corollary \ref{t2} we deduce the
following useful result:
\begin{corollary}
\label{34} If  $X$ and  $Y$ are two simply connected  CW-complex
such that  $H_{*}(X,R)$ and $H_{*}(Y,R)$
 are free,
then their respective  Adams-Hilton models are
 quasi-isomorphic if and only if the  Whitehead exact sequences
associated to these models  are isomorphic.
 \end{corollary}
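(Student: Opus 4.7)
The plan is to reduce this corollary directly to the preceding one (the algebraic statement that two free dgas with free indecomposable homology are quasi-isomorphic iff their Whitehead exact sequences are isomorphic). The only extra content one needs for the topological version is an identification of the chain complex of indecomposables of the Adams-Hilton model with a desuspended cellular chain complex of $X$.

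First I would recall that, by construction, the Adams-Hilton model $(T(s^{-1}\overline{\operatorname{Cell} X}),\partial)$ is a free dga on $V = s^{-1}\overline{\operatorname{Cell} X}$, one generator per (positive-dimensional) cell of $X$, shifted down by one. A standard property of the model (as in Adams--Hilton's original construction) is that the linear part $d$ of $\partial$ agrees, up to the desuspension shift, with the cellular boundary operator of $X$; equivalently, the chain complex of indecomposables $(V,d)$ is isomorphic to $s^{-1}\widetilde{C}_{*}(X,R)$. Consequently
\[
H_n(V,d) \;\cong\; \widetilde{H}_{n+1}(X,R),
\]
and the analogous statement holds for the Adams-Hilton model $(T(W),\delta)$ of $Y$.

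Second, under the hypothesis that $H_{*}(X,R)$ and $H_{*}(Y,R)$ are free graded $R$-modules, the identifications above show that $H_{*}(V,d)$ and $H_{*}(W,d')$ are free. Hence both Adams-Hilton models lie in the scope of the previous corollary, which applies verbatim and yields: the models are quasi-isomorphic if and only if their Whitehead exact sequences are isomorphic. The converse direction is in any case automatic since the Whitehead exact sequence is an invariant of the quasi-isomorphism type, as noted in the proof of \thmref{X9}.

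The only nontrivial point in this chain is the identification of $(V,d)$ with the desuspended reduced cellular chain complex; everything else is a direct invocation of the preceding algebraic corollary. I would therefore make that identification explicit (citing \cite{ada}) as the one substantive step, and then simply apply the corollary.
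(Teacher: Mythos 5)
Your proposal is correct and follows essentially the same route as the paper: identify the indecomposables of the Adams--Hilton model with the (desuspended) cellular chains of $X$, so that freeness of $H_{*}(X,R)$ gives freeness of $H_{*}(V,d)$, and then invoke the preceding algebraic corollary. You are in fact slightly more careful than the paper about the degree shift $H_n(V,d)\cong \widetilde{H}_{n+1}(X,R)$, but the substance is identical.
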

\begin{proof}
According to the  properties of Adams-Hilton model we know that:
\begin{equation*}
 H_{*}(X,R)=
H_{*}(s^{-1}\overline{CellX},d)\,\,\, , \,\,\, H_{*}(Y,R)=
H_{*}(s^{-1}\overline{CellY},d')
\end{equation*}
 therefore the proof follows from  corollary \ref{t2}.
\end{proof}

Finally in  \cite{BH} Baues and Hennes  showed that there exist
4732 homotopy types of a simply connected  CW-complex  having the
following (reduced) homology groups ($n\geq4$):
$$
\begin{array}{lcrlcr}
\widetilde{H}_{n}(X)&=&\Bbb Z_{4}\oplus \Bbb Z_{4}\oplus \Bbb
Z\hspace{1cm}\widetilde{H}_{n+1}(X)&=&\Bbb Z_{8}\oplus \Bbb Z\\
\widetilde{H}_{n+2}(X)&=&\Bbb Z_{2}\oplus \Bbb Z_{4}\oplus \Bbb
Z\hspace{1cm}\widetilde{H}_{n+3}(X)&=&\hspace{-1cm}\Bbb Z
\end{array}
$$
By the  method developed  in this work  we  may  show, with  a
simple computation as we have done in   the example \ref{25}, that
there exist  36 quasi-isomorphic types of  the Adams-Hilton model
$(T(s^{-1}\overline{CellX}),\partial)$ of this space $X$. Note
that by hypothesis it is easy to check that the dga
 $(T(s^{-1}\overline{CellX}),\partial)$ is quasi-perfect.

If we denote by $\mathbf{C}_{\diagup_{\sim}}$ the set of all
isomorphism classes of objects of the category $\mathbf{C}$, by
$\mathbf{\hbar DGA}$ the set of all the quasi-isomorphism types of
objects of $\mathbf{DGA}$ and by $\mathbf{\hbar PDGA}$ the set of
all the quasi-isomorphism types of objects of $\mathbf{PDGA}$ then
propositions \ref{1}, \ref{2} and theorem \ref{7} allow us to
define  two surjective  applications:
$$\theta:\mathbf{C}_{\diagup_{\sim}}\to \mathbf{\hbar
 DGA}\hspace{2cm}\theta':\mathbf{\hbar DGA}\to \mathbf{\hbar PDGA}$$
by setting:
\begin{eqnarray*}
\theta( \{(T(V),\widetilde{\partial}),(\pi_{n})_{n\geq
2})\})=\{(T(V),\partial)\}\hspace{2cm}\theta'(
\{(T(V),\partial)\})=\{(T(V),\widetilde{\partial})\}
\end{eqnarray*}
so we can easily derive that: $$\text{ Cardinal }\mathbf{\hbar
PDGA} \leq \text{ Cardinal }\mathbf{\hbar DGA}\leq \text{ Cardinal
}\mathbf{C}_{\diagup_{\sim}}$$

\end{document}